\numberwithin{equation}{section}
\newtheorem{definition}{Definition}[section]
\newtheorem{theorem}[definition]{Theorem}
\newtheorem{proposition}[definition]{Proposition}
\newtheorem{remark}[definition]{Remark}
\newtheorem{lemma}[definition]{Lemma}
\newtheorem{corollary}[definition]{Corollary}
\title{A generalization of the Schwarz lemma for transversally harmonic maps\footnote{Supported by NSFC Grants No.11771087 and No.12171091.}}
\author{Xin Huang, Weike Yu}
\date{}
\begin{document}
\maketitle
	
\begin{abstract}
In this paper, we consider transversally harmonic maps between Riemannian manifolds with Riemannian foliations. In terms of the Bochner techniques and sub-Laplacian comparison theorem, we are able to establish a generalization of the Schwarz lemma for transversally harmonic maps  of bounded generalized transversal dilatation. In addition, we also obtain a Schwarz type lemma for transversally holomorphic maps between K\"ahler foliations.
\end{abstract}

\section{Introduction}
	
	%引言写得不是太好，也不通顺。 同样，你也可以模仿论文中的引言， 先回顾holomorphic maps的Schwarz lemma (可以简要一些，但是关键的几个结果提及一下，尤其本文将涉及的结果）、然后是harmonic map的Schwarz lemma的相关工作。 本文研究的 transversally holomorphic maps 和transversally harmonic map是holomorphic map, harmonic map的自然推广， 我们的目的是将Schwarz lemma 推广到这种情形。然后介绍本文的结果和大致方法。

As we all know, the Schwarz lemma plays an important role in differential geometry and complex analysis. The classical Schwarz-Pick lemma (cf.\cite{ref20}) states that any holomorphic map between two unit discs $D$ in $\mathbb{C}$ is distance-decreasing with respect to the Poincar\'e metric. In \cite{[Ah]}, Ahlfors extended this result to holomorphic maps from the unit disc $D$ into a 1-dimensional K\"ahler manifold with curvature bounded above by a negative constant. Later, this lemma has been generalized to the holomorphic maps between higher dimension (almost) complex manifolds, see \cite{ref21, ref22, ref23, ref14, [To], [Ni], [CN], [Yu]} and an excellent book \cite{[KL-2]}, etc. Among them, a major advance is Yau's generalization, which states that any holomorphic map from a complete K\"ahler manifold with Ricci curvature bounded below by a non-positive constant $-k_1$ to a Hermitian manifold with holomorphic bisectional curvature bounded above by a negative constant $-k_2$ is distance-decreasing up to the constant $k_1/k_2$. Besides, it was also extended to generalized holomorphic maps between pseudo-Hermitian manifolds (cf. \cite{[DRY]}), and generalized holomorphic maps between pseudo-Hermitian manifolds and Hermitian manifolds (cf. \cite{[CDRY]}, \cite{[RT]}).
	
On the other hand, in \cite{ref16}, Goldberg and Har'El established a Schwarz type lemma for harmonic maps with bounded dilatation between Riemannian manifolds. Later, Shen \cite{[She]} introduced the concept of the harmonic maps with generalized dilatation by a constant, and proved a related Schwarz type lemma, which improves the Goldberg and Har'El's result. More specifically, he proved that every harmonic map with generalized dilatation by a constant $\beta>0$ from a complete Riemannian manifold with Ricci curvature bounded below by a non-positive constant $-k_1$ to a Riemannian manifold with sectional curvature bounded above by a negative constant $-k_2$ is distance-decreasing up to the constant $\beta^2 k_1/k_2$. Recently, some Schwarz type lemmas have also established for generalized harmonic maps between Riemannian manifolds, see \cite{[CZ17]}, \cite{[CLQ]}, etc.

The main aim of this paper is to establish Schwarz type lemmas for transversally harmonic maps between Riemannian manifolds with Riemannian foliations (see Definition \ref{def2.8}) and transversely holomorphic maps between Riemannian manifolds with K\"ahler foliations (see Definition \ref{def2.11}), which generalize Shen and Yau's Schwarz lemmas, respectively. To state our main results, let us introduce some notations. Given a Riemannian manifold $(M^m, g)$ with a Riemannian foliation $\mathcal{F}$,  let $\mathcal{V}=T\mathcal{F}$ denote the tangent bundle of $\mathcal{F}$, and let $\mathcal{H}$ be the orthogonal complement of $\mathcal{V}$ with respect to $g$. Using the Riemannian connection $\nabla^M$ on $(M^m,g)$, one can define two tensors $A$ and $h$ of type $(1,2)$ by
\begin{equation}
\begin{aligned}
A(X,Y) & =-\pi_{\mathcal{V}}(\nabla^M_{\pi_{\mathcal{H}}(Y)} \pi_{\mathcal{H}}(X)), \\  
h(X,Y) & =\pi_{\mathcal{H}}(\nabla^M_{\pi_{\mathcal{V}}(Y)}\pi_{\mathcal{V}}(X)),
\end{aligned}
\end{equation}
where $\pi_{\mathcal{H}}:TM\rightarrow\mathcal{H}$ and $\pi_{\mathcal{V}}:TM\rightarrow\mathcal{V}$ are the natural projections. Note that $A\equiv0$ if and only if the horizontal distribution $\mathcal{H}$ is integral, and the foliation $\mathcal{F}$ is said to be totally geodesic if $h\equiv0$. Moreover, we give the definitions of $C^1$-norms of $A$ and $h$ as follows:
\begin{equation}
\begin{aligned}
&|A|_{C_1}=\sup_{y \in M}\{|A|(y),|\nabla A|(y)\}, \\
&|h|_{C_1}=\sup_{y \in M}\{|h|(y),|\nabla h|(y)\}.
\end{aligned}
\end{equation}
For our purpose, we need to derive the Bochner formulas for these maps, and establish a sub-Laplacian comparison theorem on Riemannian manifolds with Riemannian foliations (see Sec.\ \ref{sec3}). In terms of the Bochner formulas and comparison theorem, we prove the following 
\begin{theorem}\label{thm1.1}
Let $(M^m,\mathcal{F},g)$ be a complete Riemannian manifold with a Riemannian foliation $\mathcal{F}$ and its transverse Ricci curvature bounded from below by $-K_1\le 0$, and $|A|_{C^1}, |h|_{C^1}$ bounded from above. Let $(N^n,\tilde{\mathcal{F}},\tilde{g})$ be another Riemannian manifold with a Riemannian foliation $\tilde{\mathcal{F}}$ and its transverse sectional curvature bounded from above by $-K_2<0$. Then for any transversally harmonic map $f:M \to N$ with bounded generalized transversal dilatation of order $\beta$ (cf. Definition \ref{def4.1}), we have 
\begin{equation}
f^*\tilde{g}_{\tilde{\mathcal{H}}}\le \beta^2 \frac{K_1}{K_2}  g_{\mathcal{H}},
\end{equation} 
 where $\mathcal{H}$ (resp. $\tilde{\mathcal{H}}$) is the orthogonal complement of the tangent bundle of $\mathcal{F}$ (resp. $\tilde{\mathcal{F}}$) with respect to $g$ (resp. $\tilde{g}$), and $g_{\mathcal{H}}=g|_{\mathcal{H}}$, $\tilde{g}_{\tilde{\mathcal{H}}}=\tilde{g}|_{\tilde{\mathcal{H}}}$. In particular, if $K_1=0$, $f$ is horizontally constant, i.e., $df_{{\mathcal{H}},\tilde{\mathcal{H}}}= \pi_{\tilde{\mathcal{H}}} \circ df \circ i_{\mathcal{H}}\equiv0$, where $i_{\mathcal{H}} : \mathcal{H} \to TM$ is the inclusion morphism.
\end{theorem}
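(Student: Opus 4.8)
The plan is to imitate the classical Bochner–Yau argument, adapted to the transversal setting, applied to the horizontal energy density $u = |df_{\mathcal{H},\tilde{\mathcal{H}}}|^2$ of the map. First I would record the transversal Bochner formula established in Section~3: for a transversally harmonic map $f$, the sub-Laplacian $\Delta_B u$ of the horizontal energy density splits into (i) a nonnegative term coming from $|\nabla df_{\mathcal{H},\tilde{\mathcal{H}}}|^2$ (a Kato-type inequality), (ii) a term involving the transverse Ricci curvature of $M$, bounded below by $-K_1$, which contributes $\ge -c\,K_1 u$, (iii) a term involving the transverse sectional curvature of $N$, bounded above by $-K_2$, which — using the bounded generalized transversal dilatation of order $\beta$ to compare $|df_{\mathcal{H},\tilde{\mathcal{H}}} \wedge df_{\mathcal{H},\tilde{\mathcal{H}}}|$ with $u^2$ — contributes $\ge (K_2/\beta^2)\,u^2$ up to constants, and (iv) lower-order error terms controlled by $|A|_{C^1}$ and $|h|_{C^1}$. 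Combining these, I expect an inequality of the schematic shape $\Delta_B u \ge \frac{K_2}{\beta^2} u^2 - K_1 u - (\text{bounded})\, u$ wherever $u > 0$.

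Next I would invoke the sub-Laplacian comparison theorem from Section~3 together with the completeness of $M$ to run a maximum-principle / Omori–Yau argument. The curvature and $C^1$ bounds guarantee that the relevant Laplacian comparison holds, so $u$ is a bounded function (or, if not a priori bounded, one cuts off with a distance function $r$ to which the comparison theorem applies and lets the radius tend to infinity, exactly as in Shen's and Yau's proofs). Applying the generalized maximum principle to $u$ at a near-maximum point $x_k$ with $u(x_k) \to \sup u =: S$ and $\Delta_B u(x_k) \le \varepsilon_k \to 0$, the differential inequality forces $0 \ge \frac{K_2}{\beta^2} S^2 - K_1 S$, hence $S \le \beta^2 K_1 / K_2$, i.e. $|df_{\mathcal{H},\tilde{\mathcal{H}}}|^2 \le \beta^2 K_1/K_2$. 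Upgrading this pointwise bound on the energy density to the tensorial inequality $f^*\tilde g_{\tilde{\mathcal{H}}} \le \beta^2 \frac{K_1}{K_2} g_{\mathcal{H}}$ again uses the generalized transversal dilatation hypothesis, which by definition bounds the largest eigenvalue of $f^*\tilde g_{\tilde{\mathcal{H}}}$ relative to $g_{\mathcal{H}}$ by $\beta^2$ times the trace; combining with the trace bound $u \le \beta^2 K_1/K_2$ yields the eigenvalue bound. When $K_1 = 0$ this gives $u \equiv 0$, so $df_{\mathcal{H},\tilde{\mathcal{H}}} \equiv 0$.

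The main obstacle, and the step deserving the most care, is deriving the transversal Bochner formula in (iv)–(iii): unlike the Riemannian case, the sub-Laplacian $\Delta_B$ does not commute cleanly with the transverse connection, and the non-integrability tensor $A$ and the second fundamental form $h$ of the foliation produce extra terms when one differentiates the horizontal part $df_{\mathcal{H},\tilde{\mathcal{H}}}$ of the differential. Controlling these error terms requires precisely the $C^1$ bounds $|A|_{C^1}, |h|_{C^1} < \infty$ in the hypothesis, and one must check that they enter only at order $u$ (not $u^{3/2}$ or higher), so that they can be absorbed against the good curvature term $\tfrac{K_2}{\beta^2} u^2$ for $u$ large. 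A secondary technical point is verifying that the distance function on $M$ used for the cutoff is compatible with the sub-Laplacian comparison theorem — i.e. that the comparison is stated for the function whose sub-Laplacian we can estimate — but this is exactly what Section~3 is set up to provide.
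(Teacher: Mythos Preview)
Your proposal has two genuine gaps.

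First, the paper does \emph{not} run the maximum principle on the trace $u=|df_{\mathcal{H},\tilde{\mathcal{H}}}|^2=\operatorname{tr}\mathbf{A}$, but on the \emph{largest eigenvalue} $\lambda$ of the pullback matrix $\mathbf{A}_{\alpha\beta}=f^{\tilde\alpha}_\alpha f^{\tilde\alpha}_\beta$. This choice is what produces the sharp constant. With only a sectional-curvature bound on $N$, the target-curvature term in the Bochner formula for the trace is $K_2\bigl((\operatorname{tr}\mathbf{A})^2-|\mathbf{A}|^2\bigr)=2K_2\sum_{i<j}\lambda_i\lambda_j$, and the dilatation hypothesis $\lambda_1\le\beta^2(\lambda_2+\cdots+\lambda_q)$ does \emph{not} imply $(\operatorname{tr}\mathbf{A})^2-|\mathbf{A}|^2\ge\tfrac{1}{\beta^2}(\operatorname{tr}\mathbf{A})^2$; take $q=2$, $\lambda_1=\lambda_2$, $\beta=1$, where the left side is $2\lambda_1^2$ and the right side is $4\lambda_1^2$. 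So your schematic inequality $\Delta_{\mathcal{H}}u\ge\tfrac{K_2}{\beta^2}u^2-K_1u$ is false with that coefficient, and the trace route does not deliver $u\le\beta^2K_1/K_2$. By contrast, computing the sub-Laplacian of (a smooth substitute $\tilde\lambda$ for) $\lambda$ at its maximum point yields
\[
\tfrac{1}{2}\Delta_{\mathcal{H}}\tilde\lambda \;\ge\; -K_1\lambda + K_2\,\lambda\,(\operatorname{tr}\mathbf{A}-\lambda),
\]
and now the dilatation bound reads exactly $\operatorname{tr}\mathbf{A}-\lambda\ge\lambda/\beta^2$, giving $\tfrac{1}{2}\Delta_{\mathcal{H}}\tilde\lambda\ge -K_1\lambda+\tfrac{K_2}{\beta^2}\lambda^2$ and hence $\lambda\le\beta^2K_1/K_2$. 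The non-smoothness of $\lambda$ is handled by parallel-translating the top eigenvector from the maximum point along geodesics via the generalized Bott connection and working with $\tilde\lambda=|df_{\mathcal{H},\tilde{\mathcal{H}}}(\eta)|^2$. Your ``upgrade'' step at the end is therefore moot: the eigenvalue is what one must estimate from the start.

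Second, you have misplaced where the bounds on $|A|_{C^1}$ and $|h|_{C^1}$ enter. The transversal Bochner formula for a transversally harmonic map (Lemma~\ref{bochner}) is \emph{clean}: only the transverse Ricci curvature of $M$ and the transverse sectional curvature of $N$ appear, with no lower-order error terms involving $A$ or $h$. The tensors $A$ and $h$ are used exclusively in the sub-Laplacian comparison theorem (Theorem~\ref{lapp}), where they relate $R^M$ to $R^T$ and thereby control $\Delta_{\mathcal{H}}r^2$ for the cutoff in the auxiliary function $\Phi=(a^2-r^2)^2\lambda$ on $B_a(z)$. Your worry that $A,h$ might produce terms of order $u^{3/2}$ in the Bochner inequality, and the ``error-absorption'' step you describe, simply do not arise.
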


\begin{theorem} \label{thm1.2}	
Let $(M^m,\mathcal{F},g,J)$ be a complete Riemannian manifold with a K{\"a}hler foliation $\mathcal{F}$ and its transverse Ricci curvature bounded from below by $-K_1 (K_1 \ge 0)$, $|A|_{C_1}$ and $|h|_{C_1}$ bounded from above. Let $(N^n,\tilde{\mathcal{F}},\tilde{g},\tilde{J})$ be another Riemannian manifold with a K{\"a}hler foliation $\tilde{\mathcal{F}}$ and its transverse bisectional curvature bounded from above by $-K_2 < 0$. Then any transversally holomorphic map $f: M \to N$ satisﬁes
\begin{equation}
f^*\tilde{g}_{\tilde{\mathcal{H}}}\le \frac{K_1}{K_2}  g_{\mathcal{H}}.
\end{equation} 
In particular, if $K_1=0$, $f$ is horizontally constant.
\end{theorem}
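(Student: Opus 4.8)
The plan is to follow the classical Bochner-technique strategy adapted to the transverse setting, running in parallel with the argument for Theorem~\ref{thm1.1} but exploiting the extra rigidity coming from the complex structures. Let $u = |df_{\mathcal{H},\tilde{\mathcal{H}}}|^2$ denote the squared norm of the horizontal part of the differential; equivalently, $u$ is the trace with respect to $g_{\mathcal{H}}$ of the pulled-back metric $f^*\tilde g_{\tilde{\mathcal{H}}}$. First I would record the transverse Bochner formula for transversally holomorphic maps between Kähler foliations (a special case of the general Bochner formula established in Sec.~\ref{sec3}); because $f$ is transversally holomorphic, the first-order term involving the transverse second fundamental form drops out, and the formula takes the Schwarz-ready form
\begin{equation}
\tfrac12 \Delta_{\mathcal{H}} u \;\geq\; -K_1\, u \;+\; K_2\, u^2 \;+\; (\text{terms controlled by } |A|_{C_1},\,|h|_{C_1}),
\end{equation}
where $\Delta_{\mathcal{H}}$ is the sub-Laplacian and the curvature inputs are the lower bound $-K_1$ on the transverse Ricci curvature of $M$ and the upper bound $-K_2$ on the transverse bisectional curvature of $N$. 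The key point, exactly as in Yau's original argument, is that using the \emph{bisectional} curvature bound (rather than merely the sectional one) lets one absorb the full Hessian-type term and obtain the clean quadratic coefficient $K_2$ without the factor $\beta^2$ that appears in Theorem~\ref{thm1.1}.

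Next I would run the maximum-principle / Omori–Yau argument. Since $M$ is complete and its transverse Ricci curvature is bounded below with $|A|_{C_1},|h|_{C_1}$ bounded, the sub-Laplacian comparison theorem of Sec.~\ref{sec3} gives a suitable Omori–Yau type maximum principle for $\Delta_{\mathcal{H}}$ on $M$ (or, if $u$ is not known a priori to be bounded, one multiplies by a cutoff built from the transverse distance function and the comparison estimate, then lets the cutoff exhaust $M$). Applying this to $u$: at (or in the limit of) a point where $u$ nearly attains its supremum one has $\Delta_{\mathcal{H}} u \leq \varepsilon$ and $|\nabla_{\mathcal{H}} u|$ small, so the Bochner inequality forces
\begin{equation}
0 \;\geq\; -K_1\, \sup u \;+\; K_2\, (\sup u)^2,
\end{equation}
hence $\sup u \leq K_1/K_2$. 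Tracing back, $u \leq K_1/K_2$ pointwise means precisely $f^*\tilde g_{\tilde{\mathcal{H}}} \leq (K_1/K_2)\, g_{\mathcal{H}}$, since for a transversally holomorphic map the pulled-back horizontal metric is controlled by its trace (the eigenvalue/trace comparison here is where one uses that $f$ intertwines the transverse complex structures, so $f^*\tilde g_{\tilde{\mathcal{H}}}$ is transversally $(1,1)$ and nonnegative). When $K_1 = 0$ the inequality gives $u \equiv 0$, i.e.\ $df_{\mathcal{H},\tilde{\mathcal{H}}} \equiv 0$, which is the asserted horizontal constancy.

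The main obstacle I anticipate is not the maximum-principle step but the bookkeeping in the Bochner formula: one must verify that the error terms generated by the non-integrability tensor $A$ and the second fundamental form $h$ of the foliation — which enter because the sub-Laplacian and the transverse Levi-Civita connection do not commute as cleanly as in the unfoliated case, and because $f$ need not be foliate in the vertical directions — are genuinely \emph{lower-order} (controlled by $|A|_{C_1}, |h|_{C_1}$ times $u$ or $u^{3/2}$, not $u^2$ with a bad sign) so that they can be absorbed or handled by the cutoff without spoiling the quadratic term $K_2 u^2$. A secondary technical point is ensuring the sub-Laplacian comparison theorem is strong enough to yield the Omori–Yau principle in the sub-Riemannian setting; this should follow from the estimates of Sec.~\ref{sec3}, but the precise form of the cutoff function and the decay of its sub-gradient and sub-Laplacian need to be matched to the growth of the error terms.
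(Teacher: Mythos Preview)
Your overall strategy matches the paper's: derive a Bochner inequality $\tfrac12\Delta_{\mathcal H}u\ge -K_1u+K_2u^2$ for $u=e_{\mathcal H}(f)$, then run the cutoff/maximum-principle argument using the sub-Laplacian comparison theorem, and let the radius tend to infinity.

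The one substantive correction is that your anticipated ``main obstacle'' does not arise. In the paper's formulation the transverse Bochner formula for transversally holomorphic maps (Lemma~\ref{lemma2.14}) is \emph{clean}: it involves only $|\nabla^T df_{\mathcal H,\tilde{\mathcal H}}|^2$, the transverse Ricci term, and the transverse bisectional curvature term, with \emph{no} lower-order contributions from $A$ or $h$. This is because the computation is carried out entirely with the generalized Bott connection and the transverse curvature $R^T$; the identities \eqref{2.58}--\eqref{comh} show that only horizontal indices and transverse curvature components enter. Consequently there is nothing to absorb in the Bochner step, and you get $\tfrac12\Delta_{\mathcal H}u\ge -K_1u+K_2u^2$ directly from the curvature hypotheses.

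The bounds on $|A|_{C_1}$ and $|h|_{C_1}$ enter \emph{only} through the sub-Laplacian comparison theorem (Theorem~\ref{lapp}), which controls $\Delta_{\mathcal H}r$ for the Riemannian distance $r$; this is what makes the cutoff $(a^2-r^2)^2$ work. So the division of labor is cleaner than you feared: Bochner handles the curvature, and $A,h$ appear solely in the comparison estimate for the auxiliary distance function.
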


	\section{Transversally harmonic maps} \label{sec2}
	
	\subsection{Riemannian and K{\"a}hler foliations} \label{sec2.1}
	
	Let $(M^m,g)$ be a smooth Riemannian manifold and $\mathcal{V}\subset TM$ be an integrable distribution on $M$. The collection of integral manifolds of $\mathcal{V}$ is referred to as a foliation denoted by $\mathcal{F}$, where $\mathcal{V}$ is called the tangent bundle of $\mathcal{F}$, and the maximal connected integral submanifold of $\mathcal{V}$ is called a leaf of $\mathcal{F}$ (cf. \cite{ref1}, \cite{ref5}). Let $\mathcal{H}=\mathcal{V}^{\bot}$ be the orthogonal complement of $\mathcal{V}$ with respect to $g$, then we have the following orthogonal decomposition: 
\begin{equation}
	TM = \mathcal{V} \oplus \mathcal{H}.\label{2.1}
\end{equation} 
Note that $\mathcal{H}$ (resp. $\mathcal{V}$) is also referred to as the horizontal distribution (resp. vertical distribution) on $(M^m,g,\mathcal{F})$. A foliation is said to be Riemannian if $\mathcal{L}_{U}g_\mathcal{H}$ vanishes horizontally for all $U\in \mathcal{V}$, where $\mathcal{L}$ denotes the Lie derivative, $g_{\mathcal{H}}(\cdot, \cdot)=g(\pi_{\mathcal{H}}(\cdot), \pi_{\mathcal{H}}(\cdot))$, and $\pi_{\mathcal{H}}:TM\rightarrow\mathcal{H}$ is the natural projection. Let $\mathcal{Q}= TM/ \mathcal{V}$ be the normal bundle of $\mathcal{F}$, then there is a bundle map $\sigma:\mathcal{Q} \to \mathcal{H}$ satisfying $\pi_Q \circ \sigma=id$, where $\pi_Q:TM \to \mathcal{Q}$ is a projection. Let 
\begin{equation}
	g_\mathcal{Q} (s,t) = g(\sigma(s),\sigma(t)), \quad \forall s,t\in \Gamma \mathcal{Q},
\end{equation} 
which is a Riemannian metric on $Q$, then $\sigma:(\mathcal{Q},g_\mathcal{Q})\to (\mathcal{H},g_\mathcal{H})$ is an isomorphism. Without confusion, we always identify $\mathcal{H}$ and $\mathcal{Q}$ in the sense of above isomorphism.

On a Riemannian manifold $(M, g)$ with a Riemannian foliation $\mathcal{F}$, we introduce the generalized Bott connection ${\nabla}^{\mathfrak{B}}$ on $M$ (cf. \cite{ref27}), defined by 
\begin{equation} \label{bott}
	{\nabla}^{\mathfrak{B}}_XY =
	\begin{cases}
	\pi_{\mathcal{H}}[X,Y] & X \in \Gamma(\mathcal{V}),Y \in \Gamma(\mathcal{H})  \\ 
	\pi_{\mathcal{H}} \nabla^M_X Y & X \in \Gamma (\mathcal{H}),Y \in \Gamma(\mathcal{H})\\
	 \pi_{\mathcal{V}}[X,Y] & X \in \Gamma(\mathcal{H}),Y \in \Gamma(\mathcal{V})
	\\ \pi_{\mathcal{V}} \nabla^M_X Y & X \in \Gamma (\mathcal{V}),Y \in \Gamma (\mathcal{V})
	\end{cases},
\end{equation}
where $\nabla^M$ is the Riemannian connection on $M$, $\pi_{\mathcal{H}}:TM\rightarrow\mathcal{H}$ and $\pi_{\mathcal{V}}:TM\rightarrow\mathcal{V}$ are natural projections. Clearly,  ${\nabla}^{\mathfrak{B}}$ preserves the decomposition \eqref{2.1}, and satisfies
\begin{align}
\nabla^{\mathfrak{B}}_Xg_{\mathcal{H}}=0,\ \nabla^{\mathfrak{B}}_Yg_{\mathcal{V}}=0,
\end{align}
for any $X\in TM,\ Y\in \mathcal{V}$, where $g_{\mathcal{H}}=g(\pi_{\mathcal{H}}(\cdot), \pi_{\mathcal{H}}(\cdot))$ and $g_{\mathcal{V}}=g(\pi_{\mathcal{V}}(\cdot), \pi_{\mathcal{V}}(\cdot))$. However, in general, $\nabla^{\mathfrak{B}}$ does not preserve the Riemannian metric $g$.
	
The torsion and curvature operator of ${\nabla}^{\mathfrak{B}}$ are defined respectively by
\begin{equation}
\begin{aligned}
T(X,Y)&={\nabla}^{\mathfrak{B}}_XY-{\nabla}^{\mathfrak{B}}_YX-[X,Y],\\
R(X,Y)Z&={\nabla}^{\mathfrak{B}}_X{\nabla}^{\mathfrak{B}}_YZ-{\nabla}^{\mathfrak{B}}_Y{\nabla}^{\mathfrak{B}}_XZ-{\nabla}^{\mathfrak{B}}_{[X,Y]}Z,
\end{aligned}
\end{equation}
and set 
\begin{equation} 
R(X,Y,Z,W)=\langle R(Z, W)X,Y \rangle
\end{equation}
for any $X,Y,Z,W\in \Gamma(TM)$, where $\langle \cdot, \cdot \rangle=g(\cdot, \cdot)$.
\begin{lemma}\label{tors}
For any $X,Y\in \Gamma(TM)$, we have
\begin{align}
T(X,Y)=-\pi_{\mathcal{V}}([\pi_{\mathcal{H}}(X), \pi_{\mathcal{H}}(Y)]).
\end{align}
\end{lemma}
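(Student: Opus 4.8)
The plan is to use that both sides of the claimed identity are $C^\infty(M)$-bilinear in $(X,Y)$, so that it suffices to verify the equality when $X$ and $Y$ each lie in $\Gamma(\mathcal{H})$ or $\Gamma(\mathcal{V})$, and then to invoke antisymmetry to reduce the four cases to three. That the torsion $T$ is tensorial is standard; that the right-hand side $(X,Y)\mapsto -\pi_{\mathcal{V}}([\pi_{\mathcal{H}}(X),\pi_{\mathcal{H}}(Y)])$ is tensorial follows from a one-line computation using $[fU,V]=f[U,V]-(Vf)U$ together with $\pi_{\mathcal{V}}\circ\pi_{\mathcal{H}}=0$.

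First I would treat $X,Y\in\Gamma(\mathcal{H})$. Here the second branch of \eqref{bott} gives $\nabla^{\mathfrak{B}}_XY=\pi_{\mathcal{H}}\nabla^M_XY$ and $\nabla^{\mathfrak{B}}_YX=\pi_{\mathcal{H}}\nabla^M_YX$, so $T(X,Y)=\pi_{\mathcal{H}}(\nabla^M_XY-\nabla^M_YX)-[X,Y]=\pi_{\mathcal{H}}[X,Y]-[X,Y]=-\pi_{\mathcal{V}}[X,Y]$, where I used that $\nabla^M$ is torsion-free and $\mathrm{id}=\pi_{\mathcal{H}}+\pi_{\mathcal{V}}$. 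Since $\pi_{\mathcal{H}}(X)=X$ and $\pi_{\mathcal{H}}(Y)=Y$ in this case, this is precisely the right-hand side. Next I would treat $X,Y\in\Gamma(\mathcal{V})$: the fourth branch of \eqref{bott} gives $\nabla^{\mathfrak{B}}_XY=\pi_{\mathcal{V}}\nabla^M_XY$ and likewise with $X,Y$ swapped, whence $T(X,Y)=\pi_{\mathcal{V}}[X,Y]-[X,Y]$; but $\mathcal{V}$ is integrable, so $[X,Y]\in\Gamma(\mathcal{V})$, $\pi_{\mathcal{V}}[X,Y]=[X,Y]$, and $T(X,Y)=0$, matching the right-hand side since $\pi_{\mathcal{H}}(X)=\pi_{\mathcal{H}}(Y)=0$.

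Finally, for the mixed case $X\in\Gamma(\mathcal{H})$, $Y\in\Gamma(\mathcal{V})$, the third branch gives $\nabla^{\mathfrak{B}}_XY=\pi_{\mathcal{V}}[X,Y]$, while the first branch, applied to $\nabla^{\mathfrak{B}}_YX$ with first slot $Y\in\Gamma(\mathcal{V})$ and second slot $X\in\Gamma(\mathcal{H})$, gives $\nabla^{\mathfrak{B}}_YX=\pi_{\mathcal{H}}[Y,X]=-\pi_{\mathcal{H}}[X,Y]$, so $T(X,Y)=\pi_{\mathcal{V}}[X,Y]+\pi_{\mathcal{H}}[X,Y]-[X,Y]=0$, again matching the right-hand side because $\pi_{\mathcal{H}}(Y)=0$. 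The remaining case $X\in\Gamma(\mathcal{V})$, $Y\in\Gamma(\mathcal{H})$ then follows from the antisymmetry of both $T$ and $(X,Y)\mapsto-\pi_{\mathcal{V}}[\pi_{\mathcal{H}}(X),\pi_{\mathcal{H}}(Y)]$. Since this is an entirely mechanical case check, there is no substantive obstacle; the only points demanding care are confirming tensoriality of the right-hand side, so that the reduction to pure-type fields is legitimate, and selecting the correct branch of the piecewise definition \eqref{bott} in each case — in particular remembering that $\nabla^{\mathfrak{B}}_YX$ in the mixed case is governed by the first, not the third, branch, and that integrability of $\mathcal{V}$ is what kills the purely vertical torsion.
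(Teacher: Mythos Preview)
Your case-by-case verification is correct and complete. The paper's own proof consists of a single sentence citing \cite[Lemma~4.1]{ref27} together with the integrability of $\mathcal{V}$; that cited lemma presumably gives the general torsion formula $T(X,Y)=-\pi_{\mathcal{V}}[\pi_{\mathcal{H}}X,\pi_{\mathcal{H}}Y]-\pi_{\mathcal{H}}[\pi_{\mathcal{V}}X,\pi_{\mathcal{V}}Y]$ for the Bott connection, after which integrability of $\mathcal{V}$ kills the second term. Your direct computation is exactly the self-contained argument underlying that citation, so the two proofs are mathematically identical in content---yours simply unpacks what the paper outsources.
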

\begin{proof}
This lemma immediately follows from \cite[Lemma 4.1]{ref27} and the integrability of the vertical distribution $\mathcal{V}$ ( i.e., $[\mathcal{V},\mathcal{V}]\subset \mathcal{V}$).
\end{proof}

The generalized Bott connection ${\nabla}^{\mathfrak{B}}$ on $M$ can induces a transverse connection $\nabla^T$ on $\mathcal{Q}$ by the isomorphic map $\sigma$, which is called the transverse Levi-Civita connection. More precisely, $\nabla^T$ is given by
\begin{align}
\nabla^T_Xs=\sigma^{-1}(\nabla^{\mathfrak{B}}_X\sigma(s))\label{2.6}
\end{align}
for any $X\in \Gamma(TM), s\in \Gamma(Q)$. Moreover, it is metric and torsion-free, i.e.,
\begin{equation}
\begin{aligned}
X\langle Z, W\rangle=\langle \nabla^T_X Z, W \rangle+\langle Z, \nabla^T_X W \rangle,\\
\nabla^T_X\pi_{\mathcal{H}}(Y)-\nabla^T_Y\pi_{\mathcal{H}}(X)-\pi_{\mathcal{H}}[X,Y]=0
\end{aligned}
\end{equation}
for any $X,Y\in \Gamma(TM)$ and $Z,W\in \Gamma(\mathcal{H})$. Note that here we have used the identification between $\mathcal{H}$ and $\mathcal{Q}$, and for simplicity, we will use this identification throughout this paper. The transverse curvature operator of $\nabla^T$ is defined by
\begin{equation}\begin{aligned} \label{trcu}
R^T(X,Y)Z&={\nabla}^{T}_X {\nabla}^{T}_YZ- {\nabla}^{T}_Y{\nabla}^{T}_XZ- {\nabla}^{T}_{[X,Y]}Z.
\end{aligned} \end{equation}
By the definitions of $R$ and $R^T$, it is easy to see that
\begin{align}
R^T(X,Y)Z=R(X,Y)Z\label{2.9.}
\end{align}
for any $X,Y\in \Gamma(TM)$ and $Z \in \Gamma(\mathcal{H})$. Combining \eqref{2.9.} and \cite[Corollary 5.12, p. 45]{ref2}, we obtian
\begin{lemma}\label{cure}
For any $X,Y\in \Gamma(TM)$ and $Z \in \Gamma(\mathcal{H})$, we have 
\begin{align}
R(X,Y)Z=R^T(X,Y)Z=R^T(\pi_\mathcal{H}(X),\pi_\mathcal{H}(Y))Z=R(\pi_\mathcal{H}(X),\pi_\mathcal{H}(Y))Z.\label{2.10.}
\end{align}
\end{lemma}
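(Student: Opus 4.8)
The first equality $R(X,Y)Z=R^T(X,Y)Z$ and the last equality $R^T(\pi_\mathcal{H}(X),\pi_\mathcal{H}(Y))Z=R(\pi_\mathcal{H}(X),\pi_\mathcal{H}(Y))Z$ are both direct instances of \eqref{2.9.} (recall $Z\in\Gamma(\mathcal{H})$ throughout), the latter obtained simply by substituting the horizontal fields $\pi_\mathcal{H}(X),\pi_\mathcal{H}(Y)$ for $X,Y$. Hence the whole statement reduces to the middle equality $R^T(X,Y)Z=R^T(\pi_\mathcal{H}(X),\pi_\mathcal{H}(Y))Z$, i.e.\ to showing that the transverse curvature operator applied to a horizontal vector depends only on the horizontal components of its first two arguments.

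To prove this, I would expand $X=\pi_\mathcal{H}(X)+\pi_\mathcal{V}(X)$ and $Y=\pi_\mathcal{H}(Y)+\pi_\mathcal{V}(Y)$ and use the bilinearity of $R^T(\cdot,\cdot)Z$ in its first two slots, producing four terms, three of which carry at least one vertical argument. It therefore suffices to show $R^T(U,W)Z=0$ whenever $U\in\Gamma(\mathcal{V})$ and $W\in\Gamma(TM)$; combined with the skew-symmetry $R^T(X,Y)=-R^T(Y,X)$ this also handles the case where the vertical field occupies the second slot. This vanishing is precisely the content of \cite[Corollary 5.12, p.~45]{ref2}: for a Riemannian foliation, the curvature of the transverse Levi-Civita connection is annihilated by interior multiplication with vertical vector fields, which is the infinitesimal expression of the fact that the holonomy of $\mathcal{F}$ acts by transverse isometries. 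Putting these together with \eqref{2.9.} yields all three equalities at once.

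The only point requiring care is that the cited corollary is normally formulated for the transverse connection on the normal bundle $\mathcal{Q}$ rather than on $\mathcal{H}$, so one must transport it along the isometry $\sigma:(\mathcal{Q},g_\mathcal{Q})\to(\mathcal{H},g_\mathcal{H})$ using the definition \eqref{2.6} of $\nabla^T$; under our standing identification of $\mathcal{Q}$ with $\mathcal{H}$ this is immediate. I do not expect any genuine obstacle here — the lemma is essentially a bookkeeping consequence of the Riemannian-foliation hypothesis once the correct form of the reference is in hand.
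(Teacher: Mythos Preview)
Your proposal is correct and matches the paper's approach exactly: the paper derives the lemma in one line by ``combining \eqref{2.9.} and \cite[Corollary 5.12, p.~45]{ref2}'', which is precisely the pair of ingredients you invoke, with \eqref{2.9.} supplying the outer equalities and Tondeur's corollary (that $i_U R^T=0$ for vertical $U$) supplying the middle one after the horizontal--vertical decomposition you describe. Your added remarks on transporting the statement from $\mathcal{Q}$ to $\mathcal{H}$ via $\sigma$ are accurate and simply make explicit what the paper leaves implicit under its standing identification.
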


For convenience, on a Riemannian manifold $(M^m,g)$ with a Riemannian foliation $\mathcal{F}$, we shall use the following convention on the range of indices unless otherwise stated:
\begin{equation} 
\begin{aligned} 
	&A,B,C,\cdots= 1, 2,\dots, m;\\
	&\alpha,\beta,\gamma,\cdots = 1,\dots,q;\\
	&i,j,k,\dots = q+1,q+2, \dots, m,
\end{aligned} 
\end{equation}
where $q$ is the codimension of $\mathcal{F}$, and we agree that repeated indices are summed over the respective ranges.

Let  $\{e_A\}=\{e_\alpha,e_i\}$ be a local orthonormal frame field of $TM$, where $e_i \in \Gamma(\mathcal{V}), e_\alpha \in \Gamma(\mathcal{H})$,  and let $\{\omega^A\}=\{\omega^\alpha,\omega^i\}$ be its dual frame field. Let $R^A_{BCD}$ and ${(R^T)}^{{\alpha}}_{{\beta}{A}{B}}$ be the components of $R$ and $R^T$ with respect to $\{e_A\}$ respectively, that is,
\begin{align}
&R(e_A,e_B)e_C=R^D_{CAB} e_D,\\
&R^T(e_A,e_B)e_\beta={(R^T)}^{{\alpha}}_{{\beta}{A}{B}} e_\alpha.
\end{align}
From \eqref{2.9.}, \eqref{2.10.}, we know that
\begin{align}
&{(R^T)}^{{\alpha}}_{{\beta}{A}{i}}={R}^{{\alpha}}_{{\beta}{A}{i}}=0,\\
&{(R^T)}^{{\alpha}}_{{\beta}{j}{B}}={R}^{{\alpha}}_{{\beta}{j}{B}}=0,\\
&{(R^T)}^{{\alpha}}_{{\beta}{\gamma}{\delta}}={R}^{{\alpha}}_{{\beta}{\gamma}{\delta}}.\label{2.17}
\end{align}
Set 
\begin{align}
R^T_{{\beta}{\alpha}{\gamma}{\delta}}=R^T(e_{\beta}, e_{\alpha}, e_{\gamma}, e_{\delta})={(R^T)}^{{\alpha}}_{{\beta}{\gamma}{\delta}},
\end{align}
then it satisfies the following properties:
\begin{equation} 
\begin{aligned} \label{bich} 
&R^T_{{\alpha}{\beta}{\gamma}{\delta}} = -R^T_{{\alpha}{\beta}{\delta}{\gamma}}=-R^T_{{\beta}{\alpha}{\gamma}{\delta}},\\ 
&{R}^{T}_{{\alpha}{\beta}{\gamma}{\delta}} +{R}^{T}_{{\alpha}{\gamma}{\delta}{\beta}}+{R}^{T}_{{\alpha}{\delta}{\beta}{\gamma}}=0,\\
&R^T_{{\alpha}{\beta}{\gamma}{\delta}}=R^T_{{\gamma}{\delta}{\alpha}{\beta}}.
\end{aligned}
\end{equation}	
Let $Z=\sum_\alpha \xi^\alpha e_\alpha$ and $W=\sum_\beta \eta^\beta e_\beta$ be two horizontal vector fields on $M$, the transverse sectional curvature is defined by
\begin{equation}
\begin{aligned}
{K}^T(Z,W) &=\frac{ {R}^T(Z,W,W,Z)}{|Z|^2|W|^2 -\langle Z,W\rangle ^2}\\
	&=\frac {R^{T}_{\alpha\beta \gamma \delta} \xi^\alpha \eta^\beta \eta^\gamma \xi^\delta}{\sum_{\alpha}  (\xi^\alpha)^2\sum_{\alpha}  (\eta^\alpha)^2-(\sum_{\alpha}  \xi^\alpha\eta^\alpha)^2},
\end{aligned}
\end{equation}
and the transverse Ricci tensor is given by
\begin{align}
Ric^T(Z,W)=\langle R^T(Z, e_\alpha)e_\alpha, W \rangle,
\end{align}
so
\begin{align}
Ric^T_{\alpha\beta}=Ric^T(e_\alpha, e_\beta)=R^T_{\gamma\beta\alpha\gamma}. 
\end{align}  

On a Riemannian manifold $(M, g)$ with a Riemannian foliation $\mathcal{F}$, there is an important degenerate elliptic operator, namely, the sub-Laplacian operator. For a smooth function $u$ on $M$, the sub-Laplacian of $u$ is defined as
\begin{equation} \label{dela} 
\begin{aligned}
\Delta_\mathcal{H} u&=(\nabla^{\mathfrak{B}} du)(e_\alpha, e_\alpha) \\
	&=e_{\alpha}^2(u)-\left({\nabla}^{\mathfrak{B}}_{e_{\alpha}}e_{\alpha}\right)u,\\
	&=e_{\alpha}^2(u)-\left({\nabla}^{T}_{e_{\alpha}}e_{\alpha}\right)u.
\end{aligned}
 \end{equation}

Next, we will discuss the relationship between transverse curvature tensor $R^T$ and Riemannian curvature tensor $R^M$ on a Riemannian manifold $(M, g)$ with a foliation $\mathcal{F}$. Following the Nakagawa and Takagi's paper \cite{ref13}, one can define two tensors $A$ and $h$ of type (1,2) for $X, Y\in \Gamma(TM)$ by
 \begin{equation}
\begin{aligned} \label{2.21}
A(X,Y) & =-\pi_{\mathcal{V}}(\nabla^M_{\pi_{\mathcal{H}}(Y)} \pi_{\mathcal{H}}(X)), \\  
h(X,Y) & =\pi_{\mathcal{H}}(\nabla^M_{\pi_{\mathcal{V}}(Y)}\pi_{\mathcal{V}}(X)),
\end{aligned}
\end{equation}
where $\nabla^M$ is the Riemannian connection on $(M, g)$. Let $h_{ij} ^\alpha$ (resp. $A_{\alpha\beta} ^i$) be the components of $h$ (resp. $A$) with respect to the local frame field $\{e_A\}$. The foliation $\mathcal{F}$ is Riemannian if and only if 
\begin{align}
A_{\alpha\beta}^i=-A_{\beta\alpha}^i.\label{2.24.}
\end{align}
	
In \cite{ref13} or \cite{[Li]}, the authors gave the following formulas, which will be used later,
\begin{equation}  \label{har}
	\begin{aligned}
	&h_{i\beta; j} ^\alpha = h_{\beta i; j} ^\alpha = h_{i k} ^\alpha h_{k j} ^\alpha, \\
	&h^\alpha_{i\beta;\gamma}=A^\alpha_{\beta i;\gamma}=h^\alpha_{ik}A^k_{\beta\gamma},\\
	&A_{\alpha j; \beta}^i =-A_{\alpha \gamma}^i A_{\gamma \beta}^i, \\
	&h_{ij;k} ^\alpha - h_{ik;j} ^\alpha = R^M_{\alpha ijk} , \\
	&h_{ij;\beta} ^\alpha - h_{i\beta; j} ^\alpha + A_{\alpha j; \beta}^i - A_{\alpha \beta; j}^i =R^M_{\alpha ij\beta} ,\\
	&h_{i \beta; \gamma} ^\alpha - h_{i \gamma; \beta } ^\alpha + A_{\alpha \beta; \gamma}^i - A_{\alpha \gamma; \beta }^i = R^M_{\alpha i\beta \gamma},
	\end{aligned}
\end{equation}
where $h^A_{BC;D}$ (resp. $A^A_{BC;D}$) are the covariant derivatives of $h$ (resp. $A$) with respect to $\nabla^M$. Moreover, the Ricci identity for the second covariant derivative of $h$ with respect to $\nabla^M$ is given by
\begin{align}
h_{BC;DE}^A - h_{BC;ED}^A = h_{BC}^F R^M_{AFDE}+h_{FC}^A R^M_{BFDE} +h_{BF}^A R^M_{CFDE}.
\end{align}

Using the relationship between $\nabla^M$ and $\nabla^{\mathfrak{B}}$, we can deduce that
	
\begin{lemma}[cf. {\cite[Corollary 5.12]{ref2}}] \label{th}
		For any $X, Y, Z, W\in \mathcal{H}$, we have
		\begin{equation}
		\begin{aligned}
		\langle R^M(X,Y)Z,W \rangle=&\langle R^T(X,Y)Z,W \rangle +2\langle A(X,Y),A(Z,W) \rangle \\ &+\langle A(Y,W),A(X,Z)\rangle -\langle A(Y,Z),A(X,W) \rangle.
		\end{aligned}
		\end{equation}
\end{lemma}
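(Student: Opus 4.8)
The plan is to carry out the foliated analogue of O'Neill's curvature identity for Riemannian submersions (compare \cite{ref2}), taking the generalized Bott connection $\nabla^{\mathfrak B}$ — equivalently, under the identification $\mathcal H\cong\mathcal Q$, the transverse Levi-Civita connection $\nabla^T$ — as the connection playing the role of the connection ``downstairs''. Since all three terms of the asserted equality are tensorial in $X,Y,Z,W\in\mathcal H$, it suffices to verify it for arbitrary local \emph{horizontal} extensions of the given vectors; one need not take the extensions foliate, because the Lie-bracket contributions cancel between $R^M$ and $R^T$.

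First I would record two elementary comparison formulas. For $X,Y\in\Gamma(\mathcal H)$, the second line of \eqref{bott} together with the definition \eqref{2.21} gives
\[
\nabla^M_XY=\nabla^T_XY-A(Y,X),\qquad \nabla^T_XY\in\Gamma(\mathcal H),\ A(Y,X)\in\Gamma(\mathcal V);
\]
taking the $\mathcal V$-part of the torsion-free identity $\nabla^M_XY-\nabla^M_YX=[X,Y]$ and using that $\mathcal F$ is Riemannian, i.e.\ \eqref{2.24.}, then yields $\pi_{\mathcal V}[X,Y]=2A(X,Y)$. Secondly, for $X\in\Gamma(\mathcal H)$, $V\in\Gamma(\mathcal V)$, $Y\in\Gamma(\mathcal H)$, metric-compatibility of $\nabla^M$ gives
\[
\langle\nabla^M_XV,Y\rangle=-\langle V,\pi_{\mathcal V}\nabla^M_XY\rangle=\langle V,A(Y,X)\rangle,
\]
so that $\pi_{\mathcal H}\nabla^M_XV$ is the horizontal vector $A^{*}_XV$ characterised by $\langle A^{*}_XV,Y\rangle=\langle V,A(Y,X)\rangle$ (the adjoint of $Y\mapsto A(Y,X)$).

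Next I would expand $\langle R^M(X,Y)Z,W\rangle$ via $R^M(X,Y)Z=\nabla^M_X\nabla^M_YZ-\nabla^M_Y\nabla^M_XZ-\nabla^M_{[X,Y]}Z$, substitute the two formulas above, and pair with the horizontal field $W$, which discards every purely vertical term. The $\nabla^T\nabla^T$-parts, together with the $\nabla^T_{[X,Y]}Z$-part (split $[X,Y]=\pi_{\mathcal H}[X,Y]+\pi_{\mathcal V}[X,Y]$, and use $\nabla^T_VZ=\nabla^{\mathfrak B}_VZ=\pi_{\mathcal H}[V,Z]$ on the vertical piece, after writing $\nabla^M_VZ=\nabla^M_ZV+[V,Z]$ and $\pi_{\mathcal V}[X,Y]=2A(X,Y)$), reassemble exactly into $\langle R^T(X,Y)Z,W\rangle$; the terms coming from $A^{*}_X$, $A^{*}_Y$ and from $\langle\nabla^M_ZV,W\rangle$ produce the $A$-quadratic part. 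One arrives at
\[
\langle R^M(X,Y)Z,W\rangle=\langle R^T(X,Y)Z,W\rangle-\langle A(Z,Y),A(W,X)\rangle+\langle A(Z,X),A(W,Y)\rangle-2\langle A(X,Y),A(W,Z)\rangle,
\]
and applying $A(U,U')=-A(U',U)$ for horizontal $U,U'$ (again \eqref{2.24.}) to each of the three $A$-terms converts this into the stated identity.

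The computation is otherwise routine; the two points needing care are the identity $\pi_{\mathcal H}\nabla^M_XV=A^{*}_XV$ (that the horizontal part of $\nabla^M$ applied to vertical fields is governed by the adjoint of $A$) and the bookkeeping of signs together with the tracking of which terms are vertical and hence annihilated by pairing with $W$. One should also fix the sign conventions for $R^M$ and $R^T$ at the outset, since O'Neill's classical formula uses the opposite curvature sign and would instead produce the negatives of the three $A$-terms above.
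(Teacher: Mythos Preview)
The paper does not actually supply a proof of this lemma: it is simply quoted from Tondeur's book (\cite[Corollary~5.12]{ref2}), so there is no in-paper argument to compare against. Your outline is the standard O'Neill-type computation that underlies the cited result, and the key steps you isolate --- the decomposition $\nabla^M_XY=\nabla^T_XY-A(Y,X)$, the identity $\pi_{\mathcal V}[X,Y]=2A(X,Y)$ coming from \eqref{2.24.}, and the adjoint relation $\langle\nabla^M_XV,W\rangle=\langle V,A(W,X)\rangle$ --- are all correct, as is the final sign bookkeeping converting your penultimate display into the stated form. So your proposal is fine and is essentially the argument one finds in the reference.
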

	
\begin{corollary}\label{ric}
Let $(M^m,\mathcal{F}, g)$ be a Riemannian manifold with a Riemannian foliation $\mathcal{F}$, then
\begin{equation}
\langle R^M(X,e_\alpha)e_\alpha,Y \rangle =Ric^T (X,Y) + 3\langle A(X,e_\alpha),A(e_\alpha,Y) \rangle
\end{equation}
 for any $X, Y\in \mathcal{H}$.
\end{corollary}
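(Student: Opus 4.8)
The plan is to obtain Corollary \ref{ric} by tracing the pointwise identity of Lemma \ref{th} over an orthonormal horizontal frame. Fix a local orthonormal frame $\{e_\alpha\}$ of $\mathcal{H}$ and, for arbitrary $X,Y\in\mathcal{H}$, apply Lemma \ref{th} with the four horizontal slots specialized, in order, to $X,\,e_\alpha,\,e_\alpha,\,Y$. For each fixed $\alpha$ this yields
\begin{equation}
\begin{aligned}
\langle R^M(X,e_\alpha)e_\alpha,Y\rangle = {} & \langle R^T(X,e_\alpha)e_\alpha,Y\rangle + 2\langle A(X,e_\alpha),A(e_\alpha,Y)\rangle \\
& + \langle A(e_\alpha,Y),A(X,e_\alpha)\rangle - \langle A(e_\alpha,e_\alpha),A(X,Y)\rangle ,
\end{aligned}
\end{equation}
and the strategy is then to sum this over $\alpha$.

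Next I would identify the three surviving contributions. Summing the first term over $\alpha$ gives precisely $Ric^T(X,Y)$ by the definition of the transverse Ricci tensor; note that $R^T$ is unambiguous here since it already depends only on the horizontal arguments by Lemma \ref{cure}. The second and third terms combine: by symmetry of the metric $\langle A(e_\alpha,Y),A(X,e_\alpha)\rangle=\langle A(X,e_\alpha),A(e_\alpha,Y)\rangle$, so together with the coefficient-$2$ term they contribute $3\langle A(X,e_\alpha),A(e_\alpha,Y)\rangle$ (summation convention). Finally, the last term drops out: since $\mathcal{F}$ is a Riemannian foliation, $A$ is antisymmetric in its two horizontal arguments by \eqref{2.24.}, hence $A(e_\alpha,e_\alpha)=0$ for every $\alpha$ (no sum) and therefore $\sum_\alpha\langle A(e_\alpha,e_\alpha),A(X,Y)\rangle=0$. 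Assembling these pieces gives the asserted formula.

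There is essentially no obstacle in this argument; it is a direct contraction. The only points that require a moment of care are the bookkeeping of the argument order of $A$ when $e_\alpha$ is inserted into two of the four slots of Lemma \ref{th}, and the observation that the $-\langle A(Y,Z),A(X,W)\rangle$-type term becomes $-\langle A(e_\alpha,e_\alpha),A(X,Y)\rangle$, which is annihilated exactly by the Riemannian-foliation hypothesis and not by anything weaker.
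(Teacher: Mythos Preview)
Your proof is correct and follows essentially the same approach as the paper: the paper also specializes Lemma~\ref{th} to $(X,e_\alpha,e_\alpha,Y)$, combines the two $A$-terms into $3\langle A(X,e_\alpha),A(e_\alpha,Y)\rangle$, and uses \eqref{2.24.} to kill $\langle A(e_\alpha,e_\alpha),A(X,Y)\rangle$. The only difference is that you spell out each step explicitly while the paper records the computation in a single displayed equation.
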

	
\begin{proof}
According to Lemma \ref{th} and \eqref{2.24.}, we obtain
\begin{equation}
\begin{aligned}
\langle R^M(X,e_\alpha)e_\alpha,Y \rangle =&\langle R^T(X,e_\alpha)e_\alpha,Y \rangle+3\langle A(X,e_\alpha),A(e_\alpha,Y) \rangle \\
		& -\langle A(e_\alpha,e_\alpha),A(X,Y) \rangle \\
		=&Ric^T (X,Y) +3\langle A(X,e_\alpha),A(e_\alpha,Y) \rangle .
\end{aligned} 
\end{equation} 
\end{proof}
	
Finally, we will introduce some notions and notations of K\"ahler foliations.
\begin{definition}
A foliation $\mathcal{F}$ on a Riemannian manifold $(M,g)$ is said to be transversally holomorphic if there is a complex structure $J:\mathcal{Q} \to \mathcal{Q}$ on normal bundle $\mathcal{Q}$ satisfying $\mathcal{L}_VJ=0$ for any $V \in \Gamma(\mathcal{V})$. Furthermore, $\mathcal{F}$ is said to be K\"ahler if it is transversally holomorphic, and the complex structure $J$ satisfies
\begin{enumerate}\label{par}
\renewcommand{\labelenumi}{(\theenumi)}
\item $g_{\mathcal{Q}} (Jr,Js)=g_{\mathcal{Q}}(r,s)$, $ \forall r,s \in \Gamma ({\mathcal{Q}})$;
\item $\nabla^T J$=0.
\end{enumerate}
\end{definition}

Since $\nabla^T$ preserves the the metric $g_{\mathcal{Q}}$ and complex structure $J$, it is easy to see that the transverse curvature has the following properties.
\begin{proposition}
Let $(M^m, g, \mathcal{F},J)$ be a Riemannian manifold with a K{\"a}hler foliation, then
\begin{equation} 
\begin{aligned} 
R^T(X, Y, Z, W) &= R^T(JX, JY, Z, W) = R^T(X, Y, JZ, JW),\\
Ric^T(X, Y) &= Ric^T(JX, JY),
\end{aligned} 
\end{equation}
for $X, Y, Z, W \in \mathcal{H}$.
\end{proposition}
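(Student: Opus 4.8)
The plan is to reduce the entire proposition to two facts: that $J$ is parallel for the transverse Levi-Civita connection (condition (2) in the definition of a K\"ahler foliation, i.e.\ $\nabla^T_X(Js)=J(\nabla^T_X s)$ for all $X\in\Gamma(TM)$, $s\in\Gamma(\mathcal{Q})$), and that $J$ is a $g_{\mathcal{Q}}$-isometry with $J^2=-\mathrm{id}$ (condition (1)), so in particular $J$ is skew-symmetric with respect to $\langle\cdot,\cdot\rangle$ and sends local orthonormal frames of $\mathcal{H}\cong\mathcal{Q}$ to local orthonormal frames of $\mathcal{H}$. The first step is the standard observation that a parallel tensor commutes with the curvature operator: feeding $\nabla^T J=0$ into the definition \eqref{trcu} three times gives, for $X,Y\in\Gamma(TM)$ and $Z\in\Gamma(\mathcal{H})$,
\[
R^T(X,Y)(JZ)=\nabla^T_X\nabla^T_Y(JZ)-\nabla^T_Y\nabla^T_X(JZ)-\nabla^T_{[X,Y]}(JZ)=J\bigl(R^T(X,Y)Z\bigr).
\]

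Granting this, the first identity follows at once from the convention $R^T(X,Y,Z,W)=\langle R^T(Z,W)X,Y\rangle$: using that $R^T(Z,W)$ commutes with $J$ and that $J$ preserves $\langle\cdot,\cdot\rangle$,
\[
R^T(JX,JY,Z,W)=\langle R^T(Z,W)(JX),JY\rangle=\langle J\,R^T(Z,W)X,JY\rangle=\langle R^T(Z,W)X,Y\rangle=R^T(X,Y,Z,W).
\]
The second identity then comes from combining this with the pair symmetry $R^T(X,Y,Z,W)=R^T(Z,W,X,Y)$ of \eqref{bich}, applied with $(JZ,JW)$ in the first pair:
\[
R^T(X,Y,JZ,JW)=R^T(JZ,JW,X,Y)=R^T(Z,W,X,Y)=R^T(X,Y,Z,W).
\]
For the Ricci statement I would first rewrite $Ric^T$ as a trace: from $Ric^T(U,V)=\langle R^T(U,e_\alpha)e_\alpha,V\rangle$, the convention for the four-tensor, and the pair symmetry, $Ric^T(U,V)=\sum_\alpha R^T(U,e_\alpha,e_\alpha,V)$ for any local orthonormal frame $\{e_\alpha\}$ of $\mathcal{H}$. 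Replacing $\{e_\alpha\}$ by the orthonormal frame $\{Je_\alpha\}$ and then using the two four-tensor identities already proved (which together give $R^T(JX,Je_\alpha,Je_\alpha,JY)=R^T(X,e_\alpha,e_\alpha,Y)$), one obtains
\[
Ric^T(JX,JY)=\sum_\alpha R^T(JX,Je_\alpha,Je_\alpha,JY)=\sum_\alpha R^T(X,e_\alpha,e_\alpha,Y)=Ric^T(X,Y).
\]

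I expect the only delicate point to be bookkeeping rather than mathematics: one must handle the ``transposed'' convention $R^T(X,Y,Z,W)=\langle R^T(Z,W)X,Y\rangle$ consistently, and make sure the two sign changes produced by $J^2=-\mathrm{id}$ (for instance when passing between $\{e_\alpha\}$ and $\{Je_\alpha\}$ inside arguments of $R^T$) cancel. The genuine input is the parallel-tensor-commutes-with-curvature identity displayed above, which is legitimate here precisely because $\nabla^T$ is a bona fide linear connection on the normal bundle $\mathcal{Q}$, even though the generalized Bott connection $\nabla^{\mathfrak{B}}$ does not preserve the full metric $g$.
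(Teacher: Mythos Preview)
Your proof is correct and is exactly the argument the paper has in mind: the paper does not spell out a proof at all, merely remarking that the identities follow ``since $\nabla^T$ preserves the metric $g_{\mathcal{Q}}$ and complex structure $J$,'' which is precisely your starting point. Your handling of the paper's convention $R^T(X,Y,Z,W)=\langle R^T(Z,W)X,Y\rangle$, the pair symmetry from \eqref{bich}, and the frame-independence trick for $Ric^T$ are all in order.
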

	
Let $(M^m, g)$ be a Riemannian manifold with a K\"ahler foliation $(\mathcal{F},J)$ of codimension $2q$. Denote the complexified horizontal distribution by $\mathcal{H}^{\mathbb{C}}=\mathcal{H}\otimes\mathbb{C}$. By extending $g$ and $J$ $\mathbb{C}$-linearly to $\mathcal{H}^{\mathbb{C}}$, the complexified horizontal distribution can be decomposed as 
\begin{align}
\mathcal{H}^{\mathbb{C}}=\mathcal{H}^{1,0}\oplus\mathcal{H}^{0,1},
\end{align}
where $\mathcal{H}^{1,0}$ and $\mathcal{H}^{0,1}$ are the eigenspaces of $J$ corresponding to eigenvalues $\sqrt{-1}$ and $-\sqrt{-1}$, respectively. Let $\{e_A\}=\{e_{\alpha},e_{\alpha^\ast},e_i\}$ be a local orthonormal frame field of $TM$, where $e_i \in \Gamma(\mathcal{V})$, $e_{\alpha}, e_{\alpha^\ast}\in \Gamma(\mathcal{H})$, and $e_{\alpha^\ast}=Je_{\alpha}$, and let $\{\omega^A\}=\{\omega^\alpha,\omega^{\alpha^\ast},\omega^i\}$ be its coframe field. Here we use the following convention on the ranges of indices:
\begin{equation} 
\begin{aligned} 
&A,B,C\dots =1,\dots,m;\\
&\alpha,\beta,\gamma\dots= 1,\dots,q;\\
&\alpha^\ast,\beta^\ast,\gamma^\ast\dots = q+1,\dots,2q;\\
&i,j,k\dots= 2q+1,\dots,m.
\end{aligned} 
\end{equation}	
Set $\eta_\alpha=\frac{\sqrt{2}}{2}(e_\alpha-\sqrt{-1}Je_\alpha)$, $\overline{\eta_{\alpha}}=\frac{\sqrt{2}}{2}(e_\alpha+\sqrt{-1}Je_\alpha)$, then $\{\eta_\alpha\}$ is a unitary frame field of $\mathcal{H}^{1,0}$. Let $(R^{T})^{\beta}_{\alpha \gamma \bar{\delta}}$ be the components of $R^T$ with respect to $\{\eta_\alpha, \overline{\eta_{\alpha}}\}$, that is
\begin{align}
R^T(\eta_\gamma,\overline{\eta_{\delta}})\eta_\alpha=(R^T)^\beta_{\alpha\gamma\bar{\delta}}\eta_\beta.
\end{align}
Set
\begin{align}
(R^T)^\beta_{\alpha\gamma\bar{\delta}}=R^T_{\alpha\bar{\beta}\gamma\bar{\delta}},
\end{align}
then it satisfies the following properties
\begin{align}
R^T_{\alpha\bar{\beta}\gamma\bar{\delta}}=-R^T_{\bar{\beta}\alpha\gamma\bar{\delta}}=-R^T_{\alpha\bar{\beta}\bar{\delta}\gamma},\ R^T_{\alpha\bar{\beta}\gamma\bar{\delta}}=R^T_{\gamma\bar{\beta}\alpha\bar{\delta}}=R^T_{\gamma\bar{\delta}\alpha\bar{\beta}}.
\end{align}
Suppose $Z=Z^\alpha \eta_\alpha$ and $W=W^\gamma \eta_\gamma$ are two vectors in $\mathcal{H}^{1,0}$, the transverse holomorphic bisectional curvature determined by $\{Z,W\}$ is defined by
\begin{equation}
\begin{aligned}
	H^T(Z,W) &=\frac{R^T(Z,\overline{Z},W,\overline{W})}{|Z|^2|W|^2}=\frac{ R^T_{\alpha \bar{\beta} \gamma \bar{\delta}}Z^\alpha \overline{Z^\beta} W^\gamma \overline{W^\delta} }{(Z^\alpha \overline{Z^\alpha})(W^\gamma \overline{W^\gamma})}.
\end{aligned}
\end{equation}
If $Z=W$, the above quantity is called the transverse holomorphic sectional curvature in the direction $Z$. The transverse Ricci curvature is defined as 
\begin{equation}
R^T_{\alpha \bar{\beta}} =R^T_{\gamma \bar{\gamma}\alpha \bar{\beta}}.
\end{equation}
	
\subsection{Transversally harmonic maps}
Let $(M^m,\mathcal{F},g)$ and $(N^n,\tilde{\mathcal{F}},\tilde{g})$ be two Riemannian manifolds with Riemannian foliations $\mathcal{F}$ and $\tilde{\mathcal{F}}$, respectively.
	
\begin{definition}\label{def2.7}
A smooth map $f: (M^m,\mathcal{F},g)\rightarrow (N^n,\tilde{\mathcal{F}},\tilde{g})$ is called a foliated map if $df(\mathcal{V}) \subset \tilde{\mathcal{V}}$, where $\mathcal{V}$ and $\tilde{\mathcal{V}}$ are the tangent bundles of $\mathcal{F}$ and $\tilde{\mathcal{F}}$, respectively. 
	\end{definition}
For a foliated map $f: (M^m,\mathcal{F},g)\rightarrow (N^n,\tilde{\mathcal{F}},\tilde{g})$, one can define the transverse second fundamental form ${\nabla}^{\mathfrak{B}}df_{{\mathcal{H}},\tilde{\mathcal{H}}}$  by 
\begin{equation} 
\begin{aligned}
{\nabla}^{\mathfrak{B}}df_{{\mathcal{H}},\tilde{\mathcal{H}}}(X,Y) &=({\nabla}^{\mathfrak{B}}_{X}df_{{\mathcal{H}},\tilde{\mathcal{H}}})(Y)   \\
&=\tilde{{\nabla}}^{\mathfrak{B}}_{df(X)}df_{{\mathcal{H}},\tilde{\mathcal{H}}}(Y)-df_{{\mathcal{H}},\tilde{\mathcal{H}}}({\nabla}^{\mathfrak{B}}_{X}{Y}),
\end{aligned}
\end{equation}
where $df_{{\mathcal{H}},\tilde{\mathcal{H}}} = \pi_{\tilde{\mathcal{H}}} \circ df \circ i_{\mathcal{H}},$ $\pi_{\tilde{\mathcal{H}}} : TN \to \tilde{\mathcal{H}}$ is the nature projection, $i_{\mathcal{H}} : \mathcal{H} \to TM$ is the inclusion morphism, $\mathcal{H}, \tilde{\mathcal{H}}$ are their horizontal distributions respectively.
	
\begin{definition}[cf. \cite{[KW]}] \label{def2.8}
A foliated map $f: (M^m,\mathcal{F},g)\rightarrow (N^n,\tilde{\mathcal{F}},\tilde{g})$ is called a transversally harmonic map if its transverse tension field $\tau_{{\mathcal{H}},\tilde{\mathcal{H}}}(f)=0$, where 
\begin{equation} 
\begin{aligned} \label{el} 
	\tau_{{\mathcal{H}},\tilde{\mathcal{H}}}(f)&= tr_{\mathcal{H}} ({\nabla}^{\mathfrak{B}}df_{{\mathcal{H}},\tilde{\mathcal{H}}})\\
	&=({\nabla}^{\mathfrak{B}}df_{{\mathcal{H}},\tilde{\mathcal{H}}})(e_{\alpha},e_{\alpha}) \\
	&=\tilde{\nabla}^{\mathfrak{B}}_{df(e_{\alpha})}df(e_{\alpha})-df_{{\mathcal{H}},\tilde{\mathcal{H}}}({\nabla}^{\mathfrak{B}}_{e_\alpha}{e_\alpha}).
\end{aligned}
\end{equation}  
\end{definition}

Let $\Omega$ be a compact subset of $M$, then the transversal energy of $f$ on $\Omega$ is given by
\begin{align}
E_{{\mathcal{H}}}(f;\Omega)= \int_{\Omega}e_{{\mathcal{H}}}(f)\dfrac{1}{vol_{\mathcal{F}}}\mu_M,
\end{align}
where $e_{{\mathcal{H}}}(f)= \dfrac{1}{2}{|df_{{\mathcal{H}},\tilde{\mathcal{H}}}|}^2=\sum_{\alpha} \langle df_{{\mathcal{H}},\tilde{\mathcal{H}}}(e_{\alpha}),df_{{\mathcal{H}},\tilde{\mathcal{H}}}(e_{\alpha}) \rangle$ is the transversal energy density of $f$, $vol_{\mathcal{F}}(x)$ is the volume of all leaves which pass through $x \in M$, $\mu_M$ is the volume element of $M$. 
\begin{proposition}[cf. \cite{ref3}]
A foliated map $f: (M^m,\mathcal{F},g)\rightarrow (N^n,\tilde{\mathcal{F}},\tilde{g})$ is transversally harmonic if and only if it is a critical point of the transversal energy $E_{{\mathcal{H}}}(f;\Omega)$ on any compact domain $\Omega\subset M$, when all leaves of $\mathcal{F}$ is compact.
\end{proposition}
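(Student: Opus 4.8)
The plan is to reduce the statement to the first variation formula for the transversal energy functional, mimicking the classical argument for harmonic maps but working with the transverse second fundamental form $\nabla^{\mathfrak B} df_{\mathcal H,\tilde{\mathcal H}}$ in place of the ordinary Hessian. Fix a compact domain $\Omega\subset M$ and a smooth variation $f_t$ of $f$ through foliated maps with $f_0=f$ and $f_t=f$ outside $\Omega$; let $V=\frac{\partial f_t}{\partial t}\big|_{t=0}$ be the variation vector field, which (since each $f_t$ is foliated) has horizontal part a section of $f^{-1}\tilde{\mathcal H}$ along $f$, compactly supported in $\Omega$. I first differentiate $e_{\mathcal H}(f_t)=\tfrac12\sum_\alpha\langle df_t(e_\alpha),df_t(e_\alpha)\rangle$ in $t$ at $t=0$. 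Using the symmetry of the transverse second fundamental form and that the Bott connection $\nabla^{\mathfrak B}$ (equivalently $\nabla^T$ under the identification $\mathcal H\cong\mathcal Q$) is metric on $\mathcal H$, one gets $\frac{d}{dt}\big|_{0}e_{\mathcal H}(f_t)=\sum_\alpha\langle\tilde\nabla^{\mathfrak B}_{e_\alpha}V,df_{\mathcal H,\tilde{\mathcal H}}(e_\alpha)\rangle$, exactly as in the Riemannian case.

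The second step is an integration by parts on $(M,\mathcal F,g)$ against the measure $\frac{1}{\mathrm{vol}_{\mathcal F}}\mu_M$. The crucial point is that, because every leaf of $\mathcal F$ is compact, this weighted measure is the natural "transverse volume" and the sub-divergence of the basic vector field $W$ defined by $\langle W,X\rangle=\langle V,df_{\mathcal H,\tilde{\mathcal H}}(X)\rangle$ integrates to zero over $\Omega$ — this is where one invokes the foliated divergence theorem implicit in \cite{ref3}. Expanding $\mathrm{div}_{\mathcal H}W=\sum_\alpha e_\alpha\langle W,e_\alpha\rangle-\langle W,\nabla^{\mathfrak B}_{e_\alpha}e_\alpha\rangle$ and subtracting off the term already produced in Step 1 leaves precisely $-\sum_\alpha\langle V,(\nabla^{\mathfrak B}df_{\mathcal H,\tilde{\mathcal H}})(e_\alpha,e_\alpha)\rangle=-\langle V,\tau_{\mathcal H,\tilde{\mathcal H}}(f)\rangle$. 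Hence
\begin{equation}
\frac{d}{dt}\Big|_{t=0}E_{\mathcal H}(f_t;\Omega)=-\int_\Omega\langle V,\tau_{\mathcal H,\tilde{\mathcal H}}(f)\rangle\frac{1}{\mathrm{vol}_{\mathcal F}}\mu_M.
\end{equation}

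From this identity both directions are immediate: if $\tau_{\mathcal H,\tilde{\mathcal H}}(f)\equiv0$ the right side vanishes for every admissible $V$, so $f$ is a critical point; conversely, if $f$ is a critical point, then the integral vanishes for all compactly supported foliated variation fields $V$, and since the horizontal component of $\tau_{\mathcal H,\tilde{\mathcal H}}(f)$ can itself be realized (locally, then patched by a partition of unity subordinate to foliated charts) as such a $V$, the fundamental lemma of the calculus of variations forces $\tau_{\mathcal H,\tilde{\mathcal H}}(f)\equiv0$. I expect the main obstacle to be Step 2: one must check carefully that differentiation under the integral sign and the integration by parts are legitimate with respect to the weighted measure $\frac{1}{\mathrm{vol}_{\mathcal F}}\mu_M$, i.e.\ that $\tau_{\mathcal H,\tilde{\mathcal H}}(f)$ is a basic (holonomy-invariant) object so that the foliated Stokes theorem applies — this is exactly the place where the compactness of the leaves is used, and it is the only genuinely non-routine ingredient, the rest being the standard harmonic-map computation transplanted to the transverse setting.
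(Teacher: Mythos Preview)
The paper does not give a proof of this proposition at all; it is stated with the attribution ``cf.\ \cite{ref3}'' and no argument follows. So there is no proof in the paper to compare your proposal against --- the authors simply import the result from Jung--Jung (2011). Your sketch is the standard first-variation computation transplanted to the transverse setting, which is presumably what \cite{ref3} carries out in detail; if you want to actually verify the statement you would need to consult that reference, particularly for the precise form of the transverse divergence theorem with the weight $1/\mathrm{vol}_{\mathcal F}$ and the role of leaf compactness.
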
 

\begin{remark}
There is another different definition of transversally harmonic maps which was introduced by Barletta and Dragomir in \cite{ref6}, namely, a foliated map $f: (M^m,\mathcal{F},g)\rightarrow (N^n,\tilde{\mathcal{F}},\tilde{g})$ is called a transversally harmonic map if 
\begin{equation} \label{newde}
\tau_{{\mathcal{H}},\tilde{\mathcal{H}}}(f)-df_{{\mathcal{H}},\tilde{\mathcal{H}}}(\kappa)=0,
\end{equation} 
where $\kappa=\pi_{\mathcal{H}}(\nabla^M_{e_i}e_i)$ is the mean curvature of the foliation $\mathcal{F}$, $\nabla^M$ denotes the Riemannian connection on $M$. In particular, if the foliation $\mathcal{F}$ is harmonic, i.e., $\kappa=0$, these two definitions coincide. Furthermore, Barletta and Dragomir proved that a foliated map $f: (M^m,\mathcal{F},g)\rightarrow (N^n,\tilde{\mathcal{F}},\tilde{g})$ between two compact Riemannian manifolds with Riemannian foliations is transversally harmonic in the sense of \eqref{newde} if and only if it is a critical point of $E_T(f)=\frac{1}{2}\int_M|df_{{\mathcal{H}},\tilde{\mathcal{H}}}|^2\mu_M$.
\end{remark}
	
Let $(M^m,\mathcal{F},g,J)$  be a Riemannian manifold with K{\"a}hler foliation $\mathcal{F}$, and let $(N^n,\tilde{\mathcal{F}},\tilde{g},\tilde{J})$ be another Riemannian manifold with K{\"a}hler foliation $\tilde{\mathcal{F}}$.	
\begin{definition}\label{def2.11}
A foliated map $f:(M^m,\mathcal{F},g,J) \to  (N^n,\tilde{\mathcal{F}},\tilde{g},\tilde{J})$ is said to be transversally holomorphic if
\begin{align}
df_{{\mathcal{H}},\tilde{\mathcal{H}}} \circ J = \tilde{J} \circ df_{{\mathcal{H}},\tilde{\mathcal{H}}}.\label{2.39}
\end{align}
\end{definition}
\begin{remark}
According to \cite{ref4}, every transversally holomorphic map is transversally harmonic in the sense of Definition \ref{def2.8}.
\end{remark}
	
\subsection{Bochner-type formulas}	

Let $(M^m,\mathcal{F},g)$ be a Riemannian manifold with a Riemannian foliation $\mathcal{F}$. Under the orthonormal frame field $\{e_A\}=\{e_\alpha,e_i\}$ introduced in Section \ref{2.1}, the connection $1$-forms $\omega^A_B$ of generalized Bott connection ${\nabla}^{\mathfrak{B}}$ are given by
\begin{equation} \label{form}
	{\nabla}^{\mathfrak{B}}_{e_A} e_B=\omega^C_B(e_A)e_C,
	\end{equation}
where 
\begin{equation} \label{ojh}
	\omega^\alpha_i=0,\ \omega^j_\beta=0,
\end{equation}
because ${\nabla}^{\mathfrak{B}}$ preserves the decomposition \eqref{2.1}. According to \cite{[KN]}, as a linear connection, the structure equations of ${\nabla}^{\mathfrak{B}}$ are
\begin{equation} 
	\begin{cases}
	d\omega^A = -\omega^A_B \land \omega^B + T^A  &   \\ 
	d\omega^A_B = -\omega^A_C \land \omega^C_B + \Omega^A_B,  & \\  
	\end{cases} 
\end{equation}
with
\begin{equation}
T^A= \dfrac{1}{2}T^A_{CD} \omega^C \land \omega^D,
\end{equation} 
and
\begin{equation}
\Omega^A_B= \dfrac{1}{2}R^A_{BCD}\omega^C \land \omega^D, R^A_{BCD} = -R^B_{ACD},
\end{equation}
where $T^A_{CD}$ and $R^A_{BCD}$ are components of torsion $T$ and curvature tensor $R$ of ${\nabla}^{\mathfrak{B}}$, respectively. From Lemma \ref{tors} and Lemma \ref{cure}, it follows that
\begin{equation} 
\begin{aligned} \label{23}
T(\cdot,\cdot)=&T^ie_i=\dfrac{1}{2}(T^i_{\alpha \beta} \omega^\alpha \land \omega^\beta)\otimes e_i, \\
	&T^i_{\beta\gamma}=-T^i_{\gamma\beta},
\end{aligned}
\end{equation} 
and
\begin{equation} 
\begin{aligned} \label{24}
	&\Omega^\alpha_i=\Omega^j_\alpha=0, \\
	R^{\alpha}_{iAB}=R^{i}_{\alpha AB}&=0, \quad R^{A}_{\alpha iB} =R^{A}_{\alpha Cj}=0.
\end{aligned}
\end{equation} 

Suppose that $(N^n,\tilde{\mathcal{F}},\tilde{g})$ is another Riemannian manifold with a Riemannian foliation $\tilde{\mathcal{F}}$ and the generalized Bott connection $\tilde{\nabla}^{\mathfrak{B}}$. Let $\{\tilde{e}_{\tilde{A}}\}=\{\tilde{e}_{\tilde{i}}, \tilde{e}_{\tilde{\alpha}}\}$ be a local orthonormal frame field of $TN$ with $\tilde{e}_{\tilde{i}}\in \Gamma{(\tilde{\mathcal{V}})}$ and $\tilde{e}_{\tilde{\alpha}}\in \Gamma{(\tilde{\mathcal{H})}}$, and let $\{\tilde{\omega}^{\tilde{A}}\}=\{\tilde{\omega}^{\tilde{i}}, \tilde{\omega}^{\tilde{\alpha}}\}$ be its dual frame field. We will denote the corresponding geometric data, such as, the connection 1-forms, torsion and curvature, etc., on $N$ by the same notations as in $M$ , but with $\tilde{}$ on them. Then similar structure equations for $\tilde{\nabla}^{\mathfrak{B}}$ are also valid in $N$.

Let $f: (M^m,\mathcal{F},g)\rightarrow (N^n,\tilde{\mathcal{F}},\tilde{g})$ be a smooth map. In terms of the above frame fields, the differential $df:TM \to TN$ of $f$ can be written as 
\begin{equation}
df=  f^{\tilde{A}}_B \omega^B \otimes \tilde{e}_{\tilde{A}}.\label{2.47}
\end{equation}
From \eqref{2.47}, we have 
\begin{equation} \label{fenkai}
f^\ast \tilde{\omega}^{\tilde{A}}=f^{\tilde{A}}_B \omega^B.
\end{equation}

For simplification, we denote $\hat{\omega}^{\tilde{A}}_{\tilde{D}}=f^*\tilde{\omega}^{\tilde{A}}_{\tilde{D}}, {\hat{T}}^{\tilde{A}}_{{\tilde{C}}{\tilde{D}}}=f^*{\tilde{T}}^{\tilde{A}}_{{\tilde{C}}{\tilde{D}}}$ and so on. By taking the exterior derivative of (\ref{fenkai}) and using the structure equations in $M$ and $N$, we obtain 
\begin{equation} \label{15}
\mathbf{D}f^{\tilde{A}}_{B} \land \omega^{B} + \frac{1}{2}(f^{\tilde{A}}_DT^D_{BC}-f^{\tilde{C}}_Bf^{\tilde{D}}_C{\hat{T}}^{\tilde{A}}_{{\tilde{C}}{\tilde{D}}}) \omega^{B}\land\omega^C=0,
\end{equation}
where
\begin{equation} \label{16}
\mathbf{D}f^{\tilde{A}}_{B}= df^{\tilde{A}}_{B}-f^{\tilde{A}}_C \omega^C_B +f^{\tilde{D}}_B \hat{\omega}^{\tilde{A}}_{\tilde{D}}=f^{\tilde{A}}_{B,C} \omega^C,
\end{equation}
and $f^{\tilde{A}}_{B,C}$ are the covariant derivatives of $f^{\tilde{A}}_{B}$ with respect to the generalized Bott connection. From \eqref{15}, we deduce that 
\begin{equation}\label{chtwo}
f^{\tilde{A}}_{B,C}-f^{\tilde{A}}_{C,B}=f^{\tilde{A}}_D T^D_{BC}-f^{\tilde{C}}_Bf^{\tilde{D}}_C{\hat{T}}^{\tilde{A}}_{{\tilde{C}}{\tilde{D}}}.
\end{equation}
Applying the exterior derivative to \eqref{16} and using the structure equations again, we have
\begin{equation} \label{2.53}
\begin{aligned}
&\mathbf{D}f^{\tilde{A}}_{B,C} \land  \omega^C +\frac{1}{2}f^{\tilde{A}}_E R^E_{BCD}\omega^C\land\omega^D+\frac{1}{2}f^{\tilde{A}}_{B,E}T^E_{CD}\omega^C\land\omega^D\\
	&= \frac{1}{2}f^{\tilde{C}}_Bf^{\tilde{B}}_Df^{\tilde{E}}_C \hat{R}^{\tilde{A}}_{\tilde{C}\tilde{E}\tilde{B}}\omega^C\land\omega^D,
\end{aligned}
\end{equation}
where 
\begin{equation} 
\mathbf{D}f^{\tilde{A}}_{B,C}=df^{\tilde{A}}_{B,C} -f^{\tilde{A}}_{B,D} \omega^D_C -f^{\tilde{A}}_{D,C} \omega^D_B +f^{\tilde{C}}_{B,C} \hat{\omega}^{\tilde{A}}_{\tilde{C}}=f^{\tilde{A}}_{B,CD}\omega^D.
\end{equation} 
From \eqref{2.53}, it follows that
\begin{equation} \label{commu}
f^{\tilde{A}}_{B,CD} -f^{\tilde{A}}_{B,DC} = f^{\tilde{A}}_E R^E_{BCD}  - f^{\tilde{C}}_Bf^{\tilde{B}}_Df^{\tilde{E}}_C \hat{R}^{\tilde{A}}_{\tilde{C}\tilde{E}\tilde{B}}+f^{\tilde{A}}_{B,E}T^E_{CD}.
\end{equation}
	
Now we consider that $f: (M^m,\mathcal{F},g) \to (N^n,\tilde{\mathcal{F}},\tilde{g})$ is a foliated map. Clearly, the foliated condition ``$df(\mathcal{V}) \subset \tilde{\mathcal{V}}$" in Definition \ref{def2.7} is equivalent to 
\begin{align}
f^{\tilde{\alpha}}_i=0.\label{2.57}
\end{align}
Substituting \eqref{23} and \eqref{2.57} into \eqref{chtwo}, we get
\begin{equation}  \label{2.58}
f^{\tilde{\gamma}}_{\alpha, \beta}=f^{\tilde{\gamma}}_{\beta, \alpha},
\end{equation}
\begin{equation}
f^{\tilde{\gamma}}_{\alpha, i}=0.\label{2.59}
\end{equation}
From \eqref{23}, \eqref{24}, \eqref{commu}, \eqref{2.58}, \eqref{2.59}, it follows that
\begin{align}
f^{\tilde{\alpha}}_{\beta,\gamma\delta}-f^{\tilde{\alpha}}_{\beta,\delta\gamma}=f^{\tilde{\alpha}}_{\sigma}R^{\sigma}_{\beta\gamma\delta}-f^{\tilde{\beta}}_{\beta}f^{\tilde{\gamma}}_{\delta}f^{\tilde{\delta}}_{\gamma}\hat{R}^{\tilde{\alpha}}_{\tilde{\beta}\tilde{\delta}\tilde{\gamma}}\label{comh}.
\end{align}
	
The transversal energy density of $f$ is given by
\begin{equation}
e_{{\mathcal{H}}}(f)= \dfrac{1}{2}{|df_{{\mathcal{H}},\tilde{\mathcal{H}}}|}^2=\dfrac{1}{2}(f^{\tilde{\alpha}}_{\beta})^2.
\end{equation}
According to \eqref{dela}, we obtain 
\begin{equation}
\begin{aligned} \label{2.63}
\Delta_{\mathcal{H}} e_{{\mathcal{H}}}(f)&=\frac{1}{2}\Delta_{\mathcal{H}}({(f^{\tilde{\alpha}}_{\beta})}^2) \\
&=(f^{\tilde{\alpha}}_{\beta}f^{\tilde{\alpha}}_{\beta,\gamma})_\gamma \\
&=|f^{\tilde{\alpha}}_{\beta,\gamma}|^2 + f^{\tilde{\alpha}}_{\beta}f^{\tilde{\alpha}}_{\beta,\gamma\gamma}.
\end{aligned}
\end{equation}
Using \eqref{2.58} and \eqref{comh}, we perform the following computations
\begin{equation} 
\begin{aligned} \label{2.64}
f^{\tilde{\alpha}}_{\beta,\gamma\gamma} & = f^{\tilde{\alpha}}_{\gamma,\beta\gamma}\\
& = f^{\tilde{\alpha}}_{\gamma,\gamma\beta}+f^{\tilde{\alpha}}_\delta R^\delta_{\gamma\beta\gamma} - f^{\tilde{\beta}}_\gamma f^{\tilde{\delta}}_\beta f^{\tilde{\gamma}}_\gamma \hat{R}^{\tilde{\alpha}}_{\tilde{\beta}\tilde{\delta}\tilde{\gamma}}.\\
\end{aligned} 
\end{equation}
From \eqref{2.17}, \eqref{2.63}, \eqref{2.64}, it follows that
\begin{align}\label{2.65}
\Delta_{\mathcal{H}} e_{{\mathcal{H}}}(f)=|f^{\tilde{\alpha}}_{\beta,\gamma}|^2+f^{\tilde{\alpha}}_{\beta} f^{\tilde{\alpha}}_{\gamma,\gamma\beta}+f^{\tilde{\alpha}}_{\beta}f^{\tilde{\alpha}}_\delta Ric^T_{\beta\delta}-f^{\tilde{\alpha}}_{\beta} f^{\tilde{\beta}}_\gamma f^{\tilde{\varepsilon}}_\beta f^{\tilde{\gamma}}_\gamma \hat{R}^{T}_{\tilde{\beta}\tilde{\alpha}\tilde{\varepsilon}\tilde{\gamma}}.
\end{align}
Therefore we get the following 
\begin{lemma} \label{bochner}
Let $f: (M^m,\mathcal{F},g) \to (N^n,\tilde{\mathcal{F}},\tilde{g})$ be a transversally harmonic map. Then 
\begin{equation} \label{2.65.}
\Delta_{\mathcal{H}} e_{{\mathcal{H}}}(f)=|f^{\tilde{\alpha}}_{\beta,\gamma}|^2+f^{\tilde{\alpha}}_{\beta}f^{\tilde{\alpha}}_\delta Ric^T_{\beta\delta}-f^{\tilde{\alpha}}_{\beta} f^{\tilde{\beta}}_\gamma f^{\tilde{\varepsilon}}_\beta f^{\tilde{\gamma}}_\gamma \hat{R}^{T}_{\tilde{\beta}\tilde{\alpha}\tilde{\varepsilon}\tilde{\gamma}}. 
\end{equation}
\end{lemma}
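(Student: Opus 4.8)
The plan is to derive the Bochner formula \eqref{2.65.} from the already-established identity \eqref{2.65} by exploiting the transversally harmonic condition $\tau_{\mathcal{H},\tilde{\mathcal{H}}}(f)=0$. In the index notation introduced via \eqref{el}, the transverse tension field is $\tau^{\tilde\alpha}=f^{\tilde\alpha}_{\gamma,\gamma}$, so the harmonicity hypothesis reads $f^{\tilde\alpha}_{\gamma,\gamma}=0$ identically on $M$. Starting from
\begin{equation*}
\Delta_{\mathcal{H}} e_{{\mathcal{H}}}(f)=|f^{\tilde{\alpha}}_{\beta,\gamma}|^2+f^{\tilde{\alpha}}_{\beta} f^{\tilde{\alpha}}_{\gamma,\gamma\beta}+f^{\tilde{\alpha}}_{\beta}f^{\tilde{\alpha}}_\delta Ric^T_{\beta\delta}-f^{\tilde{\alpha}}_{\beta} f^{\tilde{\beta}}_\gamma f^{\tilde{\varepsilon}}_\beta f^{\tilde{\gamma}}_\gamma \hat{R}^{T}_{\tilde{\beta}\tilde{\alpha}\tilde{\varepsilon}\tilde{\gamma}},
\end{equation*}
the only term that needs to be disposed of is the second one, $f^{\tilde{\alpha}}_{\beta} f^{\tilde{\alpha}}_{\gamma,\gamma\beta}$. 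The key observation is that $f^{\tilde\alpha}_{\gamma,\gamma\beta}=\big(f^{\tilde\alpha}_{\gamma,\gamma}\big)_{,\beta}$ is, up to a correction coming from the connection, the $\beta$-derivative of the transverse tension field; since $f^{\tilde\alpha}_{\gamma,\gamma}\equiv 0$, this vanishes.

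First I would make precise the identification of $f^{\tilde\alpha}_{\gamma,\gamma}$ with the components of $\tau_{\mathcal{H},\tilde{\mathcal{H}}}(f)$: from \eqref{el} and \eqref{16}, $\tau_{\mathcal{H},\tilde{\mathcal{H}}}(f)=(\nabla^{\mathfrak{B}}df_{\mathcal{H},\tilde{\mathcal{H}}})(e_\alpha,e_\alpha)=f^{\tilde\alpha}_{\beta,\gamma}\,\omega^\gamma(e_\alpha)\,\tilde e_{\tilde\alpha}$ evaluated diagonally, giving exactly $\tau^{\tilde\alpha}=f^{\tilde\alpha}_{\gamma,\gamma}$. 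Next, because $f$ is transversally harmonic, $\tau^{\tilde\alpha}=0$ as a section; hence its covariant derivative in every direction vanishes, which in components (using the definition of $f^{\tilde\alpha}_{\gamma,\gamma\beta}$ as the covariant derivative of $f^{\tilde\alpha}_{\gamma,\gamma}$, cf. the analogue of \eqref{16}) gives $f^{\tilde\alpha}_{\gamma,\gamma\beta}=0$ for all $\beta$. Substituting this into \eqref{2.65} kills the term $f^{\tilde{\alpha}}_{\beta} f^{\tilde{\alpha}}_{\gamma,\gamma\beta}$ and yields \eqref{2.65.} immediately.

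The one point that requires a little care — and is really the only potential obstacle — is the interchange of covariant differentiation with the contraction in $f^{\tilde\alpha}_{\gamma,\gamma}$, i.e., verifying that ``$(f^{\tilde\alpha}_{\gamma,\gamma})_{,\beta}=f^{\tilde\alpha}_{\gamma,\gamma\beta}$'' holds in the sense that the second covariant derivative defined through the structure equation \eqref{2.53} agrees with differentiating the already-contracted tension field. This is legitimate because $\nabla^{\mathfrak{B}}$ (equivalently $\nabla^T$ on $\mathcal{H}$) is metric and preserves the splitting \eqref{2.1}, so the trace over the horizontal index $\gamma$ commutes with $\nabla^{\mathfrak{B}}$; one can also see it directly by noting that $\tau^{\tilde\alpha}\equiv 0$ forces $\mathbf{D}\tau^{\tilde\alpha}=0$, and the components of $\mathbf{D}\tau^{\tilde\alpha}$ are precisely $f^{\tilde\alpha}_{\gamma,\gamma\beta}$. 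I would therefore state this as a one-line remark rather than a separate computation. With that in hand, \eqref{2.65.} follows, completing the proof of Lemma \ref{bochner}.
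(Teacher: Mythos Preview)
Your proposal is correct and follows essentially the same approach as the paper: the paper's proof simply records that transversal harmonicity means $f^{\tilde\alpha}_{\gamma,\gamma}=0$ and then says the result follows immediately from \eqref{2.65}. Your write-up is just a more explicit version of this, spelling out why $f^{\tilde\alpha}_{\gamma,\gamma}\equiv 0$ forces $f^{\tilde\alpha}_{\gamma,\gamma\beta}=0$; the paper leaves that implication implicit.
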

\begin{proof}
Since $f$ is a transversally harmonic map, that is
\begin{equation} \label{2.67}
\tau_{{\mathcal{H}},\tilde{\mathcal{H}}}(f)=f^{\tilde{\alpha}}_{\gamma,\gamma}=0,
\end{equation}
the lemma immediately follows from \eqref{2.65} and \eqref{2.67}.
\end{proof}

Assume $f:(M^m,\mathcal{F},g,J) \to (N^n,\tilde{\mathcal{F}},\tilde{g},\tilde{J})$ is a transversally holomorphic map between two Riemannian manifolds with K{\"a}hler foliations. Under the local orthonormal frames $\{e_{\alpha},e_{\alpha^\ast},e_i\}$ of $TM$ and $\{\tilde{e}_{\tilde{\alpha}}, \tilde{e}_{\tilde{\alpha}^{\ast}},\tilde{e}_{\tilde{i}}\}$ of $TN$, which were introduced in Section 2.1, it is clear that \eqref{2.39} is equivalent to
\begin{equation}
f^{\tilde{\alpha}^{\ast}}_{\beta^{\ast}}= f^{\tilde{\alpha}}_{\beta},\ f^{\tilde{\alpha}}_{\beta^\ast}=- f^{\tilde{\alpha}^{\ast}}_{\beta},\label{2.67.}
\end{equation}
and the ``foliated" condition leads to
\begin{equation}
f^{\tilde{\alpha}}_i=0,\  f^{\tilde{\alpha}^{\ast}}_i=0.
\end{equation}

Since every transversally holomorphic map is transversally harmonic, then from Lemma \ref{bochner} we may deduce the Bochner formula for transversally holomorphic maps as follows.
\begin{lemma} \label{lemma2.14} 
Let $f:(M^m,\mathcal{F},g,J) \to (N^n,\tilde{\mathcal{F}},\tilde{g},\tilde{J})$ be a transversally holomorphic map, then we have 
\begin{equation} \label{bohoml} 
 \frac{1}{2}\Delta_{\mathcal{H}} e_{{\mathcal{H}}}(f)= |a^{\tilde{\alpha}}_{\beta\gamma}|^2+\overline{a^{\tilde{\alpha}}_{\beta}}a^{\tilde{\alpha}}_{\gamma}Ric^T_{\beta\bar{\gamma}}-a^{\tilde{\alpha}}_{\beta}\overline{a^{\tilde{\beta}}_{\beta}}a^{\tilde{\gamma}}_{\gamma}\overline{a^{\tilde{\delta}}_{\gamma}}\hat{R}^T_{\tilde{\alpha}\bar{\tilde{\beta}}\tilde{\gamma}\bar{\tilde{\delta}}},
\end{equation}
where $a^{\tilde{\alpha}}_{\gamma}$ and $a^{\tilde{\alpha}}_{\beta\gamma}$ are the components of $df_{\mathcal{H},\bar{\mathcal{H}}}$ and $\nabla^Tdf_{\mathcal{H}, \tilde{\mathcal{H}}}$ with respect to the unitary frame field $\{\eta_\alpha\}$ of $\mathcal{H}^{1,0}$.
\end{lemma}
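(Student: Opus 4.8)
The plan is to obtain the Bochner formula for transversally holomorphic maps by specializing Lemma \ref{bochner} to the K\"ahler setting and then rewriting every term in the unitary frame $\{\eta_\alpha\}$ of $\mathcal{H}^{1,0}$. First I would start from \eqref{2.65.}, splitting the real indices into the two halves $\{\tilde\alpha\}$ and $\{\tilde\alpha^\ast\}$ (and similarly on $M$), and then substitute the holomorphicity relations \eqref{2.67.}, namely $f^{\tilde\alpha^\ast}_{\beta^\ast}=f^{\tilde\alpha}_\beta$ and $f^{\tilde\alpha}_{\beta^\ast}=-f^{\tilde\alpha^\ast}_\beta$, together with the foliated conditions $f^{\tilde\alpha}_i=f^{\tilde\alpha^\ast}_i=0$. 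This should collapse the sums over the real horizontal indices into sums over the holomorphic indices only, at the cost of picking up combinatorial factors of $2$.

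Next I would introduce the complexified components. Setting $\eta_\alpha=\tfrac{\sqrt2}{2}(e_\alpha-\sqrt{-1}Je_\alpha)$ as in the text, and using that $\nabla^T$ preserves $J$ and the metric, the real components $f^{\tilde\alpha}_\beta$ (and their covariant derivatives $f^{\tilde\alpha}_{\beta,\gamma}$) reorganize into $a^{\tilde\alpha}_\gamma$ and $a^{\tilde\alpha}_{\beta\gamma}$; here I must check that the holomorphicity relations force the anti-holomorphic components $a^{\tilde\alpha}_{\bar\beta}$ and mixed second-order terms to vanish, so that $df_{\mathcal{H},\tilde{\mathcal{H}}}$ is genuinely a $(1,0)$-object and $\nabla^T df$ has only a $(1,0)$-part against which we contract. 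Then I translate the three terms of \eqref{2.65.} one at a time: $|f^{\tilde\alpha}_{\beta,\gamma}|^2$ becomes a multiple of $|a^{\tilde\alpha}_{\beta\gamma}|^2$; the Ricci term $f^{\tilde\alpha}_\beta f^{\tilde\alpha}_\delta Ric^T_{\beta\delta}$ becomes a multiple of $\overline{a^{\tilde\alpha}_\beta}a^{\tilde\alpha}_\gamma Ric^T_{\beta\bar\gamma}$ using $Ric^T(X,Y)=Ric^T(JX,JY)$ from the Proposition; and the curvature term becomes a multiple of the holomorphic bisectional curvature term $a^{\tilde\alpha}_\beta\overline{a^{\tilde\beta}_\beta}a^{\tilde\gamma}_\gamma\overline{a^{\tilde\delta}_\gamma}\hat R^T_{\tilde\alpha\bar{\tilde\beta}\tilde\gamma\bar{\tilde\delta}}$, invoking the symmetries $R^T(X,Y,Z,W)=R^T(JX,JY,Z,W)=R^T(X,Y,JZ,JW)$ to pass from the real curvature tensor to its bisectional form. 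Finally I would verify that the overall normalization matches and yields the factor $\tfrac12$ on the left-hand side as stated.

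The main obstacle I anticipate is bookkeeping: tracking the factors of $\sqrt2$ from the definition of $\eta_\alpha$, the combinatorial factor of $2$ from the index-halving $\{\alpha\}\sqcup\{\alpha^\ast\}$, and the doubling that occurs when a real quadratic expression is re-expressed as $|z|^2=z\bar z$, so that all these conspire to produce exactly the coefficients in \eqref{bohoml} rather than a spurious power of $2$. A secondary subtlety is making sure that the curvature identities used (the K\"ahler symmetries of $R^T$ on $\mathcal{H}$ and the relation between $R^T_{\alpha\bar\beta\gamma\bar\delta}$ and the real components $R^T_{\alpha\beta\gamma\delta}$) are applied with the correct conjugation pattern, since the indices $\tilde\beta,\tilde\varepsilon$ appearing in \eqref{2.65.} must be matched up correctly with the barred slots of $\hat R^T_{\tilde\alpha\bar{\tilde\beta}\tilde\gamma\bar{\tilde\delta}}$; once the holomorphicity constraints are fully exploited this should be mechanical, but it is where an error is most likely to creep in.
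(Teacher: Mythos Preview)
Your plan is correct and is exactly the paper's approach: start from Lemma~\ref{bochner}, expand the real horizontal indices as $\{\alpha\}\sqcup\{\alpha^\ast\}$ using the holomorphicity relations \eqref{2.67.}, and then convert the gradient, Ricci, and curvature terms one by one to the unitary frame $\{\eta_\alpha\}$. The one ingredient you do not name, and which the paper invokes explicitly, is the first Bianchi identity for $\tilde R^T$: the $J$-symmetries $R^T(X,Y,Z,W)=R^T(JX,JY,Z,W)$ alone are not enough to pass from the sectional-type sum $\langle\tilde R^T(df e_\beta,df e_\gamma)df e_\gamma,df e_\beta\rangle+\langle\tilde R^T(df e_\beta,\tilde J df e_\gamma)\tilde J df e_\gamma,df e_\beta\rangle$ to the bisectional form $\langle\tilde R^T(df e_\beta,\tilde J df e_\beta)\tilde J df e_\gamma,df e_\gamma\rangle$, so be sure to use it at that step.
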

\begin{proof}
Using the local orthonormal frames $\{e_{\alpha},e_{\alpha^\ast},e_i\}$ of $TM$ and $\{\tilde{e}_{\tilde{\alpha}}, \tilde{e}_{\tilde{\alpha}^{\ast}},\tilde{e}_{\tilde{i}}\}$ of $TN$, the Bochner formula \eqref{2.65.} can be rewritten as
\begin{equation}
\begin{aligned} \label{bochc}
\frac{1}{2}\Delta_{\mathcal{H}} e_{{\mathcal{H}}}(f)=&2(f^{\tilde{\alpha}}_{\beta,\gamma})^2 +2(f^{\tilde{\alpha}^{\ast}}_{\beta,\gamma})^2 +f^{\tilde{\alpha}}_{\beta}f^{\tilde{\alpha}}_{\delta_1} Ric^T_{\beta\delta_1}+f^{\tilde{\alpha}^{\ast}}_{\beta}f^{\tilde{\alpha}^{\ast}}_{\delta_1} Ric^T_{\beta\delta_1}\\
&-f^{\tilde{\alpha}}_{\beta}f^{\tilde{\beta_1}}_{\gamma_1} f^{\tilde{\delta_1}}_{\gamma_1} f^{\tilde{\gamma_1}}_{\beta}  \hat{R}^T_{\tilde{\beta_1}\tilde{\alpha}\tilde{\gamma_1}\tilde{\delta_1}}-f^{\tilde{\alpha}^\ast}_{\beta}f^{\tilde{\beta_1}}_{\gamma_1} f^{\tilde{\delta_1}}_{\gamma_1} f^{\tilde{\gamma_1}}_{\beta}  \hat{R}^T_{\tilde{\beta_1}\tilde{\alpha}^\ast\tilde{\gamma_1}\tilde{\delta_1}},
\end{aligned}
\end{equation}
where we have used \eqref{2.58} and \eqref{2.67.}, and add the following conventions on the ranges of indices in this proof:
\begin{equation}
\begin{aligned}
&{\alpha_1},{\beta_1},{\gamma_1}, \cdots = 1,\dots,2q;\\ 
&\tilde{\alpha_1},\tilde{\beta_1},\tilde{\gamma_1},\cdots = 1,\dots, 2\tilde{q};\\
&2q=\text{codim}\ \mathcal{F}, \ 2\tilde{q}=\text{codim}\ \tilde{\mathcal{F}}.
\end{aligned}
\end{equation}		
Now we will derive the Bochner formula for $e_{\mathcal{H}}$ with respect to the unitary frame field $\{\eta_\alpha\}$ of $\mathcal{H}^{1,0}$, where $\eta_\alpha=\frac{\sqrt{2}}{2}(e_\alpha-\sqrt{-1}e_{\alpha^*})$. Using $e_\alpha=\frac{1}{\sqrt{2}}(\eta_\alpha+\overline{\eta_{\alpha}})$ and $e_{\alpha^*}=\frac{\sqrt{-1}}{\sqrt{2}}(\eta_\alpha-\overline{\eta_{\alpha}})$, we obtain
 \begin{align}
 2(f^{\tilde{\alpha}}_{\beta,\gamma})^2 +2(f^{\tilde{\alpha}^{\ast}}_{\beta,\gamma})^2=2|(\nabla^{\mathfrak{B}}_{e_\gamma}df_{\mathcal{H}, \tilde{\mathcal{H}}})(e_\beta)|^2&=\frac{1}{2}|(\nabla^{\mathfrak{B}}_{\eta_\gamma+\overline{\eta_{\gamma}}}df_{\mathcal{H}, \tilde{\mathcal{H}}})(\eta_\beta+\overline{\eta_{\beta}})|^2\notag\\
 &=|a^{\tilde{\alpha}}_{\beta\gamma}|^2,\label{2.73}
 \end{align}
\begin{align} 
 &f^{\tilde{\alpha}}_{\beta} f^{\tilde{\alpha}}_{{\delta_1}} Ric^T_{\beta\delta_1}+  f^{\tilde{\alpha}^\ast}_{\beta}f^{\tilde{\alpha}^\ast}_{{\delta_1}} Ric^T_{\beta\delta_1}=\langle df_{{\mathcal{H}},\tilde{\mathcal{H}}}(Ric^T(e_{\beta})), df_{{\mathcal{H}}, \tilde{\mathcal{H}}}(e_{\beta}) \rangle \notag \\ 
 &= \langle df_{{\mathcal{H}},\tilde{\mathcal{H}}} (R^T(e_{\beta},e_\alpha)e_\alpha), df_{{\mathcal{H}}, \tilde{\mathcal{H}}}(e_{\beta}) \rangle  +\langle df_{{\mathcal{H}},\tilde{\mathcal{H}}}(R^T(e_{\beta},e_{\alpha^\ast})e_{\alpha^\ast}), df_{{\mathcal{H}}, \tilde{\mathcal{H}}}(e_{\beta}) \rangle\notag\\
 &=\langle df_{{\mathcal{H}},\tilde{\mathcal{H}}} (R^T(\overline{\eta_\beta},\eta_{\alpha})\overline{\eta_\alpha}), df_{{\mathcal{H}}, \tilde{\mathcal{H}}}(\eta_\beta)\rangle=\overline{a^{\tilde{\alpha}}_{\beta}}a^{\tilde{\alpha}}_{\gamma}Ric^T_{\beta\bar{\gamma}},\label{ricr1}
\end{align}
and
\begin{equation} \label{rr}
\begin{aligned}  
&f^{\tilde{\alpha}}_{\beta}f^{\tilde{\beta_1}}_{\gamma_1} f^{\tilde{\delta_1}}_{\gamma_1} f^{\tilde{\gamma_1}}_{\beta}  \hat{R}^T_{\tilde{\beta_1}\tilde{\alpha}\tilde{\gamma_1}\tilde{\delta_1}}+f^{\tilde{\alpha}^\ast}_{\beta}f^{\tilde{\beta_1}}_{\gamma_1} f^{\tilde{\delta_1}}_{\gamma_1} f^{\tilde{\gamma_1}}_{\beta}  \hat{R}^T_{\tilde{\beta_1}\tilde{\alpha}^\ast\tilde{\gamma_1}\tilde{\delta_1}}\\
=&\langle \tilde{R}^T(df_{{\mathcal{H}},\tilde{\mathcal{H}}}(e_{\beta}),df_{{\mathcal{H}},\tilde{\mathcal{H}}}(e_{\gamma}))df_{{\mathcal{H}},\tilde{\mathcal{H}}}(e_{\gamma}), df_{{\mathcal{H}}, \tilde{\mathcal{H}}}(e_{\beta}) \rangle \\&+\langle \tilde{R}^T(df_{{\mathcal{H}},\tilde{\mathcal{H}}}(e_{\beta}),\tilde{J} \circ df_{{\mathcal{H}},\tilde{\mathcal{H}}}(e_{\gamma}))\tilde{J} \circ df_{{\mathcal{H}},\tilde{\mathcal{H}}}(e_{\gamma}),df_{{\mathcal{H}}, \tilde{\mathcal{H}}}(e_{\beta}) \rangle \\
=&\langle \tilde{R}^T(df_{{\mathcal{H}},\tilde{\mathcal{H}}}(e_{\beta}),\tilde{J} \circ df_{{\mathcal{H}},\tilde{\mathcal{H}}}(e_{\beta}))\tilde{J} \circ df_{{\mathcal{H}},\tilde{\mathcal{H}}}(e_{\gamma}), df_{{\mathcal{H}}, \tilde{\mathcal{H}}}(e_{\gamma}) \rangle\\
=&\tilde{R}^T({df_{{\mathcal{H}},\tilde{\mathcal{H}}}(\eta_\beta)},{df_{{\mathcal{H}},\tilde{\mathcal{H}}}(\overline{\eta_{\beta}})},{df_{{\mathcal{H}},\tilde{\mathcal{H}}}(\eta_\gamma)},{df_{{\mathcal{H}},\tilde{\mathcal{H}}}(\overline{\eta_{\gamma}})})=a^{\tilde{\alpha}}_{\beta}\overline{a^{\tilde{\beta}}_{\beta}}a^{\tilde{\gamma}}_{\gamma}\overline{a^{\tilde{\delta}}_{\gamma}}\hat{R}^T_{\tilde{\alpha}\bar{\tilde{\beta}}\tilde{\gamma}\bar{\tilde{\delta}}}.
\end{aligned}
\end{equation}
where the second equality in \eqref{rr} follows from the Bianchi identity for $\tilde{R}^T$. From \eqref{bochc} and \eqref{2.73}-\eqref{rr}, the Bochner formula \eqref{bohoml} follows.
\end{proof}

\section{The sub-Laplacian comparison theorem} \label{sec3}
In this section, we will establish a comparison theorem for sub-Laplacian operator on a Riemannian manifold with a Riemannian foliation, which plays a similar role as the Laplacian comparison theorem in Riemannian geometry. 

\begin{theorem}\label{lapp} 
Let  $(M^m,\mathcal{F},g)$ be a complete Riemannian manifold with a Riemannian foliation $\mathcal{F}$. Assume that
\begin{align}
Ric^T(\nabla^{\mathcal{H}} r, \nabla^{\mathcal{H}} r)\geq -K_1 |\nabla^{\mathcal{H}} r|^2,
\end{align}
and 
\begin{align}
|A|_{C_1}=\sup_{y \in M}\{|A|(y),|\nabla A|(y)\} \le k_1,\\
|h|_{C_1}=\sup_{y \in M}\{|h|(y),|\nabla h|(y)\} \le k_2,
\end{align}
where $\nabla^{\mathcal{H}}=\pi_{\mathcal{H}}\circ\nabla$ is the horizontal gradient, $K_1, k_1, k_2\geq 0$, and $r$ is the Riemannian distance from a fixed point $x_0$ in $M$. 	
Then if $x$ is not in the cut locus of $x_0$, we have
\begin{equation}
\Delta_\mathcal{H} r(x) \leq q\left(\frac{1}{r}+\sqrt{C(p,q)\left(K_1+k_1+k_2+k_1k_2+k_1^2+k_2^2\right)}\right),
\end{equation}
where $q$ (resp. $p$) is the codimension (resp. dimension) of the foliation $\mathcal{F}$, and $C(p,q)$ is a constant depending only on $p$ and $q$.
\end{theorem}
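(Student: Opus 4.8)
The plan is to reduce the statement to the classical Laplacian comparison theorem on $M$ by controlling the difference between the full Laplacian $\Delta_M r$ and the sub-Laplacian $\Delta_{\mathcal{H}} r$, and then controlling the full Riemannian Ricci curvature $Ric^M$ in the radial direction in terms of $Ric^T$ and the tensors $A$, $h$. The starting point is the identity $\Delta_{\mathcal{H}} r = \Delta_M r - (\nabla^M_{e_i} dr)(e_i)$, where $\{e_i\}$ is a vertical orthonormal frame; that is, $\Delta_{\mathcal{H}} r$ is the horizontal trace of the Hessian and differs from $\Delta_M r$ by the vertical trace of the Hessian of $r$. By the second variation / Riccati comparison along the minimizing geodesic from $x_0$ to $x$, the vertical trace of $\mathrm{Hess}\, r$ can be estimated once we bound $Ric^M$ from below along that geodesic; and the radial sectional curvature contributions for the vertical directions again feed into a comparison ODE.

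First I would express, using Corollary \ref{ric} and Lemma \ref{th}, the radial Riemannian Ricci curvature $Ric^M(\nabla r,\nabla r)$ (and more generally the radial sectional curvatures needed for the Riccati equation) in terms of $Ric^T$ evaluated on the horizontal part of $\nabla r$, plus quadratic error terms in $A$ and first-derivative terms coming from $h$ and $\nabla A$. Since by hypothesis $|A|_{C_1}\le k_1$ and $|h|_{C_1}\le k_2$, and $Ric^T \ge -K_1$, one gets a lower bound of the shape $Ric^M(\nabla r, \nabla r) \ge -C'(p,q)\,(K_1 + k_1 + k_2 + k_1 k_2 + k_1^2 + k_2^2)$ along the geodesic. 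The mixed term $k_1 k_2$ and the squares $k_1^2,k_2^2$ are exactly the signatures of the curvature-structure formulas in \eqref{har} (products like $h^\alpha_{ik}A^k_{\beta\gamma}$, $A^i_{\alpha\gamma}A^i_{\gamma\beta}$, $h^\alpha_{ik}h^\alpha_{kj}$), so this step should go through by carefully collecting terms.

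Next I would invoke the classical Bochner/Riccati comparison: writing $u = \Delta_M r$, one has $u' + \frac{u^2}{m-1} \le -Ric^M(\partial_r,\partial_r)$ along the geodesic, hence with $K := C'(p,q)(K_1+k_1+k_2+k_1k_2+k_1^2+k_2^2)$ one obtains the comparison with the model of constant Ricci $-K$, giving $\Delta_M r \le (m-1)\big(\tfrac{1}{r} + \sqrt{K/(m-1)}\,\big)$ roughly, i.e. $\Delta_M r \le (m-1)(\tfrac1r + \sqrt{K})$ up to constants absorbed into $C(p,q)$. To pass from $\Delta_M r$ to $\Delta_{\mathcal{H}} r$ I would bound the vertical Hessian trace $\sum_i (\nabla^M_{e_i}dr)(e_i)$ from below: along the geodesic this is again governed by a Riccati inequality driven by the radial sectional curvatures in vertical planes, which by the same structure formulas are bounded below by $-C''(p,q)(k_1+k_2+k_1^2+k_2^2+k_1k_2)$, so the vertical trace is bounded below by $-C(p,q)\big(\tfrac1r + \sqrt{k_1+k_2+\cdots}\big)$. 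Subtracting, $\Delta_{\mathcal{H}} r = \Delta_M r - (\text{vertical trace}) \le q\big(\tfrac1r + \sqrt{C(p,q)(K_1+k_1+k_2+k_1k_2+k_1^2+k_2^2)}\big)$ after renaming constants, using $m-1-p = q$ for the leading coefficient.

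**The main obstacle** I anticipate is controlling the vertical part of the Hessian of $r$ — i.e. showing the vertical trace of $\mathrm{Hess}\,r$ is bounded \emph{below} by something of the stated form — because $r$ is the ambient distance function, not adapted to the foliation, so there is no reason for its restriction to the leaves to be well-behaved a priori; the estimate must come entirely from the index-form comparison using the bounds on $h$ and $A$ (the leaves are not totally geodesic in general, so $h$ genuinely enters). A secondary technical point is that the comparison arguments are valid only where $r$ is smooth, i.e. off the cut locus, which is exactly the hypothesis; and one must be careful that the constant $C(p,q)$ ends up depending only on $p,q$ and not on the point, which is guaranteed by the uniform $C^1$-bounds on $A$ and $h$. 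I would therefore organize the proof as: (i) pointwise curvature identities from Section \ref{sec2}; (ii) lower bound for $Ric^M(\partial_r,\partial_r)$ and for vertical radial sectional curvatures; (iii) Riccati comparison for $\Delta_M r$ and for the vertical Hessian trace; (iv) subtraction and constant bookkeeping.
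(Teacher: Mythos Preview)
Your plan has a genuine structural gap: the detour through the full Laplacian $\Delta_M r$ forces you to control curvature components that the hypotheses simply do not bound. To run the classical Laplacian comparison you need a lower bound on $Ric^M(\partial_r,\partial_r)$. Writing $\partial_r = X+V$ with $X\in\mathcal{H}$, $V\in\mathcal{V}$, this Ricci term contains the contribution $\sum_i R^M(e_i,V,V,e_i)$, which (via the Gauss equation for the leaves) is the intrinsic Ricci curvature of the leaf plus quadratic terms in $h$. Nothing in the hypotheses --- $Ric^T\ge -K_1$, $|A|_{C^1}\le k_1$, $|h|_{C^1}\le k_2$ --- bounds the leaf curvature $R^M_{ijkl}$, so you cannot get the lower bound on $Ric^M$ that step (ii) requires. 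The same obstruction hits step (iii): a \emph{lower} bound on the vertical trace of $\mathrm{Hess}\,r$ via Riccati/index comparison needs an \emph{upper} bound on the sectional curvatures $K^M(e_i,\partial_r)$, and when $\partial_r$ has a vertical component these again involve the uncontrolled leaf curvature. You correctly flagged step (iii) as the main obstacle, but the difficulty is not merely technical --- the required bound is false in general under the stated assumptions. There is also no hope of the uncontrolled terms cancelling in step (iv), since the two Riccati comparisons are nonlinear and produce one-sided estimates in which the leaf curvature enters with incompatible signs.

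The paper sidesteps all of this by never passing through $\Delta_M r$. Instead it takes the \emph{horizontal} trace of the Bochner identity for $|\nabla r|^2=1$, i.e.\ it computes $0=\tfrac12\sum_\beta (|\nabla r|^2)_{;\beta\beta}$. After commuting derivatives and using that, for a Riemannian foliation, $\sum_\beta r_{\beta;\beta}=\Delta_{\mathcal H}r$ (because $A^i_{\beta\beta}=0$), the only curvature components that appear are $R^M_{\beta\gamma\alpha\beta}$, $R^M_{\beta\alpha i\beta}$ and $R^M_{\beta j i\beta}$, each carrying at least two horizontal indices; by Corollary~\ref{ric} and the identities \eqref{har} these are all expressible in terms of $Ric^T$, $A$, $h$ and their first covariant derivatives. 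The purely vertical curvature $R^M_{ijkl}$ never enters. This yields directly a Riccati inequality
\[
\tfrac{1}{q}\bigl(\Delta_{\mathcal H}r\bigr)^2+\partial_r\bigl(\Delta_{\mathcal H}r\bigr)\le q\,C(p,q)\bigl(K_1+k_1+k_2+k_1k_2+k_1^2+k_2^2\bigr),
\]
from which the stated bound follows by the standard ODE comparison. The moral is that you should derive a Riccati inequality for $\Delta_{\mathcal H}r$ itself rather than trying to recover it as a difference of two uncontrollable quantities.
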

\begin{proof}
Let $\gamma :[0,+\infty)\rightarrow M$ be a geodesic with respect to the Levi-Civita connection $\nabla^M$ with $\gamma(0)=x_0$ and $\gamma'(0)=e$, where $e$ is a uint vector in $T_{x_0}M$. Let $\{e_A\}=\{e_i,e_\alpha\}$ be a local orthonormal frame field of $TM$ in a small neighborhood of $\gamma$, where $e_i \in \Gamma(\mathcal{V}), e_\alpha \in \Gamma(\mathcal{H})$. Since $|\nabla r|=1$ holds outside the cut locus of $x_0$, taking covariant derivatives with respect to $\nabla^M$ and summing yield
\begin{equation} \label{rbu}
\begin{aligned}
0 = \frac{1}{2} |\nabla r|_{\beta;\beta}^2 = &(r_{i;\beta})^2 + (r_{\alpha;\beta})^2 +r_ir_{i;\beta\beta} +r_\alpha r_{\alpha;\beta\beta} \\
	\ge &  (r_{\alpha;\alpha})^2 + r_ir_{\beta; i\beta} +r_\alpha r_{\beta;\alpha\beta} \\
	\ge & \frac{1}{q} (\sum_{\alpha} r_{\alpha;\alpha})^2 +  r_i(r_{\beta;\beta i}+r_C R_{\beta C i \beta}^{M})\\
	&\ \ \ \ \ \ \ \ \ \ \ \ \ \ \ \  + r_\alpha(r_{\beta;\beta\alpha}+ r_CR_{\beta C \alpha \beta}^{M}),
\end{aligned}
\end{equation}	
where ``;" means taking covariant derivatives with respect to the Riemannian connection $\nabla^M$. By \eqref{bott}, we get
\begin{align}
\omega^\gamma_\beta(e_\alpha)={}^M\omega^\gamma_\beta(e_\alpha),\label{3.6}
\end{align}
where ${}^M\omega^\gamma_\beta$ denote the connection 1-forms of $\nabla^M$. Since $\mathcal{F}$ is a Riemannian foliation, namely, $A_{\alpha \beta}^i=-A_{\beta \alpha}^i$, we obtain
\begin{equation} \label{3.7}
{}^M\omega_{\beta}^{i}(e_\beta) = - A_{\beta \beta}^i=0.
\end{equation}
From \eqref{3.6} and \eqref{3.7}, it follows that
\begin{equation}
r_{\beta;\beta} =  e_\beta ^2 r -{}^M\omega_{\beta}^{\gamma}(e_\beta) r_\gamma - {}^M\omega_{\beta}^{i}(e_\beta) r_i=  e_\beta ^2 r - \omega_{\beta}^{\gamma}(e_\beta) r_\gamma=r_{\beta,\beta},
\end{equation}
where ``," means taking covariant derivatives with respect to the generalized Bott connection $\nabla^\frak{B}$. Hence, 
\begin{equation} \label{rh}
\Delta_\mathcal{H} r= \sum_{\alpha} r_{\alpha,\alpha}=\sum_{\alpha} r_{\alpha;\alpha}.
\end{equation}
Substituting (\ref{rh}) into (\ref{rbu}) yields
\begin{equation}
\begin{aligned} \label{buds}
	0\geq \frac{1}{q} (\Delta_\mathcal{H} r)^2 + \langle \nabla \Delta_\mathcal{H} r, \nabla r \rangle + 2r_\alpha r_i R_{\beta\alpha i \beta}^{M}+ r_ir_j R_{\beta j i \beta}^{M} + r_\alpha r_\gamma R_{\beta\gamma\alpha\beta}^{M} .
\end{aligned}
\end{equation}	
From \eqref{2.24.} and \eqref{har}, it follows that 
\begin{equation} 
\begin{aligned} \label{bds12}
	&r_\alpha r_i R_{\beta\alpha i \beta}^{M} =- r_\alpha r_i A_{\beta\beta \alpha}^i +r_\alpha r_i A_{\beta\alpha\beta}^i -2 r_\alpha r_i h_{ik}^\beta A_{\beta\alpha}^k, \\
	&r_i r_j R_{\beta j i \beta}^{M} = r_i r_j h_{ij\beta}^\beta -r_i r_j A_{\beta\beta j}^i - r_i r_j h_{ik}^\beta h_{kj}^\beta - r_i r_j A_{\beta \gamma}^i A_{\gamma \beta}^j.
\end{aligned}
\end{equation}
According to Corollary \ref{ric}, we have
\begin{equation} \label{bds3}
r_\alpha r_\gamma R_{\beta\gamma\alpha\beta}^{M} =r_\alpha r_\gamma R^T_{\beta\gamma\alpha\beta} - 3r_\beta  r_\gamma A_{\alpha \beta}^i A_{\alpha\gamma}^i.
\end{equation}
In terms of \eqref{buds}-\eqref{bds3}, we deduce that 
\begin{equation} \label{fjx}
\begin{aligned}
	0 &\ge \frac{1}{q} (\Delta_\mathcal{H} r)^2 + \langle \nabla \Delta_\mathcal{H} r, \nabla r \rangle -2( r_\alpha r_i A_{\beta\beta \alpha}^i -r_\alpha r_i A_{\beta\alpha\beta}^i +2r_\alpha r_i h_{ij}^\beta A_{\beta\alpha}^j) \\
	&\qquad -(-r_i r_j h_{ij\beta}^\beta +r_i r_j A_{\beta\beta j}^i +r_i r_j h_{ik}^\beta h_{kj}^\beta + r_i r_j A_{\beta \gamma}^i A_{\gamma \beta}^j)\\
	&\quad \qquad +r_\alpha r_\gamma R^T_{\beta\gamma\alpha\beta} -3 r_\beta  r_\gamma A_{\alpha \beta}^i A_{\alpha\gamma}^i.
\end{aligned}
\end{equation}
Since $Ric^T(\nabla^{\mathcal{H}} r, \nabla^{\mathcal{H}} r)\geq -K_1 |\nabla^{\mathcal{H}} r|^2,\ |A|_{C^1}\leq k_1, \ |h|_{C^1}\leq k_2$, and using the Cauchy-Schwarz inequality, it follows from \eqref{fjx} that
\begin{align}\label{wffc}
0 &\ge\frac{1}{q} (\Delta_\mathcal{H} r)^2 + \langle \nabla \Delta_\mathcal{H} r, \nabla r \rangle - q\cdot C(p,q) (K_1+k_1+k_2+k_1k_2+k_1^2+k_2^2),
\end{align}
where $C(p,q)$ is a constant depending only on $p$ and $q$.
Set 
\begin{align}
&f(t)=\Delta_\mathcal{H} r(\gamma(t)),\\
\tilde{C}=C(p,q)& (K_1+k_1+k_2+k_1k_2+k_1^2+k_2^2),
\end{align}
then \eqref{wffc} can be rewritten as
\begin{equation}  \label{fcz}
0 \ge \frac{1}{q} f^2(t)+f'(t)-q\tilde{C}.
\end{equation}
Since a Riemannian metric is locally Euclidean, we have
\begin{equation}
\lim_{t \to 0^+} tf(t) = q - |\pi_{\mathcal{H}}(e)|^2,
\end{equation}
which implies that  
\begin{equation}
\lim_{t \to 0^+} f(t) = +\infty.
\end{equation}
Hence, there exists a real number $T>0$ such that
\begin{equation}
f^2(T) - q^2\tilde{C} = 0,
\end{equation} 
\begin{equation} \label{tnd}
f^2(t) - q^2 \tilde{C} > 0, \ \forall\  t\in (0,T).
\end{equation}
Combining (\ref{fcz}) and (\ref{tnd}) yields 
\begin{equation}
\frac{qf'(t)}{f^2(t)- q^2\tilde{C}} \le -1,\ \forall\ t\in (0,T),
\end{equation}
which is equivalent to
\begin{equation} 
\begin{aligned}
\frac{d}{dt} coth^{-1} \left(\frac{f(t)}{q \sqrt{\tilde{C}}}\right) \ge  \sqrt{\tilde{C}},\ \forall\ t\in (0,T),
\end{aligned}		
\end{equation}
thus, 
\begin{equation} \label{bdc}
\begin{aligned}
f(t) &\le q \cdot  \sqrt{\tilde{C}} \cdot coth( \sqrt{\tilde{C}} \cdot t) \\
     &\le q(\frac{1}{t}+ \sqrt{\tilde{C}}),\ \forall\ t\in (0,T).
\end{aligned}
\end{equation}		
We claim that (\ref{bdc}) holds in $(0,+\infty)$. Otherwise, there is a real number $t_1 > T$ such that 
\begin{equation}
f(t_1)> q\sqrt{\tilde{C}}.
\end{equation}
Then we can find a constant $t_2 \in (T, t_1)$ satisfying 
\begin{equation}
f'(t_2) \ge 0,\ f(t_2) >q \sqrt{\tilde{C}},
\end{equation}
thus,
\begin{equation} \label{md}
\frac{1}{q} f^2(t_2)+f'(t_2)-q\tilde{C} >0,
\end{equation}
which is a contradiction with (\ref{fcz}). Therefore, we have 
 \begin{equation}
\Delta_\mathcal{H} r \le q(\frac{1}{r}+\sqrt{C(p,q) (K_1+k_1+k_2+k_1k_2+k_1^2+k_2^2)})
\end{equation}
outside the cut locus of $x_0$.
\end{proof}
	
\section{The Schwarz lemmas}
Let $(M^m,\mathcal{F},g)$ and $(N^n,\tilde{\mathcal{F}},\tilde{g})$ be two Riemannian manifolds with Riemannian foliations $\mathcal{F}$ and $\tilde{\mathcal{F}}$ respectively, and suppose that $f:M \to N$ is a foliated map.
Under the local frame fields introduced in Section \ref{sec2.1}, the ``horizontal component of $g$ and $\tilde{g}$” can be expressed as
\begin{equation}
g_{\mathcal{H}}=(\omega^{\alpha})^2,
\end{equation}
\begin{equation}
\tilde{g}_{\tilde{\mathcal{H}}}=(\tilde{\omega}^{\tilde{\alpha}})^2,
\end{equation}
respectively. If we define a matrix  $\mathbf{A}=(\mathbf{A}_{\alpha\beta})$, where 
\begin{equation}
\mathbf{A}_{\alpha\beta}=f^{\tilde{\alpha}}_{\alpha}f^{\tilde{\alpha}}_{\beta},
\end{equation} 
and let $\lambda$ be the maximal eigenvalue of $\mathbf{A}$, then we have 
\begin{equation}\label{lamb} 
f^*\tilde{g}_{\tilde{\mathcal{H}}}=\mathbf{A}_{\alpha\beta}\omega^\alpha\omega^\beta\le \lambda  g_{\mathcal{H}},
\end{equation}
since $f$ is foliated.
\begin{definition}\label{def4.1}
Let $\{\lambda_i\}_{i=1}^q$ be the eigenvalues of $\mathbf{A}$ with
$$\lambda(=\lambda_1)\ge\lambda_2\geq\dots\ge\lambda_q \ge 0.$$
If there exists a constant $\beta >0$ for the map $f:M \to N$ such that 
$$\lambda \le \beta^2(\lambda_2+\dots+\lambda_q),$$ 
then we call $f$ to be of bounded generalized transversal dilatation of order $\beta$.	
\end{definition}

\begin{proof}[\textbf{Proof of Theorem \ref{thm1.1}}]	
According to (\ref{lamb}), it suffices to show that $\lambda \le \beta^2 \frac{K_1}{K_2}$. For arbitrary point $z$ of $M$, we define the function 
\begin{equation}
\Phi(x)=(a^2-r^2(x))^2\lambda(x),
\end{equation} where $x \in \overline{B_a (z)}=\overline{\{x\in M,r(x)< a\}}$, and $r(x)$ is the Riemannian distance function from $z$ to $x$. Since $M$ is complete, and $\Phi\geq 0, \Phi|_{\partial B_a(z)}\equiv 0$, there exists a point $x_0\in B_a (z)$ such that $\Phi(x_0)=\max_{x\in \overline{B_a (z)}}\Phi(x)$. In order to estimate the upper bound of $\lambda$, we will apply the maximum principle to $\Phi$ at $x_0$, but $r(x)$ and $\lambda(x)$ may not be $C^2$ near the point  $x_0$. The non-differentiability of $r$ and $\lambda$ can be remedied by the following methods. For $r(x)$, we may assume that it is smooth near $x_0$, otherwise it can be modified as usual (cf. \cite{ref8}). For $\lambda(x)$, let $\eta(x_0)\in \mathcal{H}$ be the unit eigenvector of $\lambda(x_0)$, and we displace it parallelly with respect to $\nabla^\mathfrak{B}$ along any Riemannian geodesic started from $x_0$ to obtain a local vector field $\eta(x)=\eta^\alpha e_\alpha$ near $x_0$. Define the function 
\begin{equation}
	\tilde{\lambda}=|df_{{\mathcal{H}},\tilde{\mathcal{H}}}(\eta)|^2 = \sum_{\tilde{\alpha}} |f^{\tilde{\alpha}}_{\alpha}\eta^\alpha|^2,
\end{equation}which is smooth near $x_0$, and 
\begin{equation} \notag 
\begin{aligned}
	&\tilde{\lambda}(x) \le \lambda(x), \\
	&\tilde{\lambda}(x_0) = \lambda(x_0).
\end{aligned}
\end{equation}
Let
\begin{align}
\tilde{\Phi}(x)=(a^2-r^2(x))^2 \tilde{\lambda}(x),
\end{align}
then $\tilde{\Phi}(x)$ also attains its maximum at $x_0$ in $B_a(z)$ and $\tilde{\Phi}(x_0)={\Phi}(x_0)$. Without loss of generality, we suppose that $\lambda\not\equiv 0$ on $B_a(z)$, so $\tilde{\lambda}(x_0)=\lambda(x_0)>0$. Applying the maximum principle to $\tilde{\Phi}$ at $x_0$ yields
\begin{equation}
0=\frac{{\nabla}^\mathcal{H}\tilde{\lambda}}{\tilde{\lambda}} +2\frac{{\nabla}^\mathcal{H}(a^2-r^2)}{(a^2-r^2)},
\end{equation}
\begin{equation} \begin{aligned} \label{qd}
0\ge(a^2-&r^2)^2\Delta_{\mathcal{H}}\tilde{\lambda} -2\tilde{\lambda}(a^2-r^2)\Delta_{\mathcal{H}}r^2 \\
&-6\tilde{\lambda}|{\nabla}^\mathcal{H}(a^2-r^2)|^2.
\end{aligned}	
\end{equation}
By $|\nabla r|=1$, we have 
$$|{\nabla}^\mathcal{H}r|^2\le 1,$$ 
thus,
\begin{equation} \label{14} 
 |{\nabla}^\mathcal{H}(a^2-r^2)|^2=|2r{\nabla}^\mathcal{H}r|^2\le 4a^2. 
\end{equation}
According to Theorem \ref{lapp}, we obtain 
\begin{equation}
\begin{aligned} \label{12}
	\Delta_{\mathcal{H}}r^2&=2r\Delta_{\mathcal{H}}r+2|{\nabla}^\mathcal{H}r|^2 \\
	& \le 2aq(\frac{1}{a}+C)+2 \\
	&\le 2q(1+a\cdot C)+2,
\end{aligned} 
\end{equation}
where $C$ is a positive constant independent of $a$.
By definition, we have	
\begin{equation}
\tilde{\lambda}= f^{\tilde{\alpha}}_{\alpha}\eta^\alpha  f^{\tilde{\alpha}}_{\beta}\eta^\beta,
\end{equation} 
then
\begin{equation}
\tilde{\lambda}_\gamma= 2 (f^{\tilde{\alpha}}_{\alpha,\gamma}\eta^\alpha  f^{\tilde{\alpha}}_{\beta}\eta^\beta + f^{\tilde{\alpha }}_{\alpha}\eta^\alpha_{\ ,\gamma}  f^{\tilde{\alpha}}_{\beta}\eta^\beta ).
\end{equation}
Due to (\ref{dela}), we perform the following computation
\begin{equation} 
\begin{aligned} \label{clac}
\Delta_{\mathcal{H}}\tilde{\lambda} = &\tilde{\lambda}_{\gamma,\gamma} \\
	= & 2 ( f^{\tilde{\alpha}}_{\alpha, \gamma\gamma}\eta^\alpha  f^{\tilde{\alpha}}_{\beta}\eta^\beta + 2 f^{\tilde{\alpha}}_{\alpha,\gamma}\eta^\alpha_{\ ,\gamma}  f^{\tilde{\alpha}}_{\beta}\eta^\beta \\ 
	& +f^{\tilde{\alpha}}_{\alpha,\gamma}\eta^\alpha  f^{\tilde{\alpha}}_{\beta, \gamma}\eta^\beta  + 2 f^{\tilde{\alpha}}_{\alpha,\gamma}\eta^\alpha  f^{\tilde{\alpha}}_{\beta}\eta^\beta_{\ ,\alpha} \\ 
	& +f^{\tilde{\alpha}}_{\alpha}\eta^\alpha_{\ ,\gamma\gamma} f^{\tilde{\alpha}}_{\beta}\eta_\beta + f^{\tilde{\alpha}}_{\alpha  }\eta^\alpha_{\ ,\gamma} f^{\tilde{\alpha}}_{\beta}\eta^\beta_{\ ,\gamma}). 
\end{aligned}
\end{equation} 
Let $\{x^A\}=\{x^i, x^\alpha\}$ be a normal coordinate system with respect to Levi-Civita connection $\nabla^M$ at $x_0$ and satisfying $\frac{\partial}{\partial x^i}|_{x_0}\in \mathcal{V}, \frac{\partial}{\partial x^\alpha}|_{x_0}\in\mathcal{H}.$ By the parallel displacement of $\{\frac{\partial}{\partial x^A}|_{x_0}\}$ with respect to $\nabla^{\mathfrak{B}}$ along any Riemannian geodesic $\sigma(t) : [0,\infty)\rightarrow M$ which starts from $x_0$, we can obtain a local orthonormal frame $\{e_A\}=\{e_i,e_\alpha\}$ around $x_0$ satisfying $\text{span}\{e_i\}=\mathcal{V}$ and $\text{span}\{e_\alpha\}=\mathcal{H}$. Then we have 
\begin{align}
\omega^A=x^Adt\label{4.16}
\end{align}
along the geodesic $\sigma(t)$, where $\{\omega^A\}$ is the dual frame of $\{e_A\}$. Using the structure equations of $\nabla^{\mathfrak{B}}$, we obtain
\begin{align}
dx^\alpha+x^\beta\omega^\alpha_\beta=0.\label{4.17}
\end{align}
Since $\eta$ is parallel along $\sigma$, i.e., $\nabla^{\mathfrak{B}}_{\frac{\partial}{\partial t}}\eta=0$, then 
\begin{align}
\eta^\alpha_{\ ,\beta}\omega^\beta=0,\label{4.18}
\end{align}
so we get
\begin{align}
\eta^\alpha_{\ ,\beta}x^{\beta}=0
\end{align}
along the geodesic $\sigma$. Therefore, 
\begin{align}
\eta^\alpha_{\ ,\beta}(x_0)=0,\label{4.20}
\end{align}
because $\{x^A\}$ is an arbitrary unit tangent vector. Differentiating \eqref{4.20} with respect to $t$ and using \eqref{4.16} and \eqref{4.17} yield
\begin{equation}
\begin{aligned}
0&=d\eta^\alpha_{\ ,\beta}\cdot x^\beta+\eta^\alpha_{\ ,\beta}\cdot dx^\beta\\
&=(d\eta^\alpha_{\ ,\beta}-\eta^\alpha_{\ ,\gamma}\omega^\gamma_\beta+\eta^\gamma_{\ ,\beta}\omega^\alpha_\gamma-\eta^\gamma_{\ ,\beta}\omega^\alpha_\gamma)x^\beta\\
&=(\eta^\alpha_{\ ,\beta A}\omega^A-\eta^\gamma_{\ ,\beta}\omega^\alpha_\gamma)x^\beta\\
&=(\eta^\alpha_{\ ,\beta A}x^Adt-\eta^\gamma_{\ ,\beta}\omega^\alpha_\gamma)x^\beta.\label{4.21}
\end{aligned}
\end{equation}
From \eqref{4.20} and \eqref{4.21}, it follows that $\eta^\alpha_{\ ,\beta A}x^Ax^\beta=0$ at $x_0$. Since $\{x^A\}$ is an arbitrary unit tangent vector, then 
\begin{align}
\eta^\alpha_{\ ,\gamma \gamma}(x_0)=0.\label{4.22}
\end{align}
 Substituting \eqref{4.20} and \eqref{4.22} into \eqref{clac}, we have at $x_0$
	\begin{equation} \begin{aligned} \label{cla}
	\Delta_{\mathcal{H}}\tilde{\lambda} = & 2 ( f^{\tilde{\alpha}}_{\alpha, \gamma\gamma}\eta_\alpha  f^{\tilde{\alpha}}_{\beta}\eta_\beta + f^{\tilde{\alpha}}_{\alpha,\gamma }\eta_{\alpha} f^{\tilde{\alpha}}_{\beta,\gamma}\eta_{\beta} ) \\ 
\ge &  2 f^{\tilde{\alpha}}_{\alpha, \gamma\gamma}\eta_\alpha  f^{\tilde{\alpha}}_{\beta}\eta_\beta 
	\end{aligned}
	\end{equation} 
From \eqref{2.58} and \eqref{comh}, we obtain
\begin{equation} 
\begin{aligned}
f^{\tilde{\alpha}}_{\alpha, \gamma\gamma} &= f^{\tilde{\alpha}}_{\gamma, \alpha\gamma} \\ 
&=  f^{\tilde{\alpha}}_{\gamma, \gamma\alpha} + f^{\tilde{\alpha}}_\delta R^\delta_{\gamma \alpha \gamma} - f^{\tilde{\beta}}_\gamma f^{\tilde{\gamma}}_\alpha f^{\tilde{\delta}}_\gamma \tilde{R}^{\tilde{\alpha}}_{\tilde{\beta}\tilde{\gamma}\tilde{\delta}}.
\end{aligned}
\end{equation}
Thus,
\begin{equation}
\begin{aligned}
f^{\tilde{\alpha}}_{\alpha, \gamma\gamma}\eta_\alpha  f^{\tilde{\alpha}}_{\sigma}\eta_\sigma &= f^{\tilde{\alpha}}_\delta R^\delta_{\gamma \alpha \gamma} \eta_\alpha f^{\tilde{\alpha}}_{\sigma}\eta_\sigma- \hat{R}^{\tilde{\alpha}}_{\tilde{\beta}\tilde{\gamma}\tilde{\delta}}  f^{\tilde{\beta}}_\gamma \cdot f^{\tilde{\alpha}}_{\sigma}\eta_\sigma \cdot f^{\tilde{\delta}}_\gamma \cdot f^{\tilde{\gamma}}_\alpha \eta_\alpha \\ 
&= R^\delta_{\gamma \alpha \gamma} \eta_\alpha \mathbf{A}_{\delta \sigma} \eta_\sigma - \hat{R}^{\tilde{\alpha}}_{\tilde{\beta}\tilde{\gamma}\tilde{\delta}}  f^{\tilde{\beta}}_\gamma \cdot f^{\tilde{\alpha}}_{\sigma}\eta_\sigma \cdot f^{\tilde{\delta}}_\gamma \cdot f^{\tilde{\gamma}}_\alpha \eta_\alpha \\ 
&=R^\delta_{\gamma \alpha \gamma} \eta_\alpha \cdot \lambda \eta_\sigma - \hat{R}^{\tilde{\alpha}}_{\tilde{\beta}\tilde{\gamma}\tilde{\delta}}  f^{\tilde{\beta}}_\gamma \cdot f^{\tilde{\alpha}}_{\sigma}\eta_\sigma \cdot f^{\tilde{\delta}}_\gamma \cdot f^{\tilde{\gamma}}_\alpha \eta_\alpha.
\end{aligned}
\end{equation}
According to 
\begin{equation}
R^\delta_{\gamma \alpha \gamma} =Ric^T_{\alpha\delta}
\end{equation} 
and the curvature assumption on $M$, we have
\begin{equation} \label{ric1}
\begin{aligned}
R^\delta_{\gamma \alpha \gamma} \eta_\alpha \cdot \lambda \eta_\sigma &= (Ric^T_{\alpha\delta})\lambda \eta_\sigma \cdot \eta_\alpha\ge  -K_1\lambda.
\end{aligned} 
\end{equation}
Using the curvature assumption on $N$ yields	
\begin{equation} \label{sec}
\begin{aligned}
\hat{R}^{\tilde{\alpha}}_{\tilde{\beta}\tilde{\gamma}\tilde{\delta}}  f^{\tilde{\beta}}_\gamma \cdot f^{\tilde{\alpha}}_{\sigma}\eta_\sigma \cdot f^{\tilde{\delta}}_\gamma \cdot f^{\tilde{\gamma}}_\alpha \eta_\alpha	\le & -K_2[(f^{\tilde{\beta}}_\gamma f^{\tilde{\beta}}_\gamma) (f^{\tilde{\alpha}}_{\sigma}\eta_\sigma f^{\tilde{\alpha}}_\alpha \eta_\alpha)-(f^{\tilde{\alpha}}_\gamma f^{\tilde{\alpha}}_{\sigma}\eta_\sigma)^2] \\ 
	=& -K_2(tr\mathbf{A} \cdot \mathbf{A}_{\sigma\alpha}\eta_\sigma\eta_\alpha -(\mathbf{A}_{\gamma\sigma}\eta_\sigma)^2) \\
	=& -K_2(tr\mathbf{A} \cdot \lambda -\lambda^2).
\end{aligned}
\end{equation}
Then by \eqref{cla}, \eqref{ric1} and \eqref{sec}, we get 
\begin{equation} \label{11}
\frac{1}{2}\Delta_{\mathcal{H}}\tilde{\lambda} \ge -{K_1}\lambda + {K_2}\lambda(tr\mathbf{A}-\lambda). 
\end{equation}
From (\ref{qd}), (\ref{14}), (\ref{12}) and (\ref{11}), it follows that
\begin{equation} 
\begin{aligned}
0 &\ge \frac{\Delta_{\mathcal{H}}{\tilde{\lambda}}}{\lambda}-2\frac{\Delta_{\mathcal{H}}(r^2)}{a^2-r^2(x_0)}-\frac{6|{\nabla}^\mathcal{H}(a^2-r^2(x))|^2}{(a^2-r^2(x_0))^2} \\
&\ge 2(-K_1+K_2 (trA-\lambda))-\frac{24a^2}{{(a^2-r^2(x_0))^2}}-2\frac{2q(1+a\cdot C)+2}{a^2-r^2(x_0)}.
\end{aligned}
\end{equation}
As $f$ has bounded generalized transversal dilatation of order $\beta$, i.e., 
$$tr\mathbf{A}-\lambda \ge \frac{\lambda}{\beta^2}, $$ 
we get
\begin{equation} 
\lambda(x_0) \le \frac{K_1}{K_2}\beta^2+\frac{12a^2 \beta^2}{K_2{(a^2-r^2(x_0))^2}}+\frac{2q(1+a\cdot C)+2}{K_2(a^2-r^2(x_0))}\beta^2,
\end{equation} 
thus, for any  $x \in B_a (z)$ 
\begin{equation}
\begin{aligned}
(a^2-r^2(x))^2\lambda(x)&\le (a^2-r^2(x_0))^2\lambda(x_0) \\
&\le \frac{K_1\beta^2}{K_2}a^4+\frac{12a^2\beta^2}{K_2}+a^2\beta^2\frac{2q(1+a\cdot C)+2}{K_2}.
\end{aligned}
\end{equation}
Therefore,
\begin{equation}
\begin{aligned}
\lambda(x)&\le \frac{\beta^2}{K_2{(a^2-r^2(x))^2}}({K_1}a^4+{12a^2}+a^2(2q(1+a\cdot C)+2)).
\end{aligned}
\end{equation}
Let $a \to \infty$, we obtain
\begin{equation}
{\sup}_{M}\lambda \le \beta^2\frac{K_1}{K_2}.
\end{equation} 
\end{proof}
	
Next, we will establish a Schwarz type lemma for transversally holomorphic maps between K{\"a}hler foliations, which generalizes Yau's Schwarz lemma.

\begin{proof}[\textbf{Proof of Theorem \ref{thm1.2}}]
Let $u=e_{{\mathcal{H}},\tilde{\mathcal{H}}}(f)$, then it is clear that 
\begin{equation} \label{213}
f^*\tilde{g}_{\mathcal{\tilde{H}}}\le u g_{\mathcal{H}}.
\end{equation}
So it is sufficient to estimate the upper bound of $u$. For arbitrary point $z$ of M, we deﬁne the function 
\begin{equation}
\Phi(x)=(a^2-r^2(x))^2u(x).
\end{equation} 
Suppose $x_0$ is the maximum point of $\Phi$ in $B_a(z)$. The funcation $r(x)$ can be considered to be twice differentiable near $x_0$ (cf. \cite{ref8}). Applying the maximum principle to $\Phi$ at $x_0$, we have 
\begin{equation}
0=\frac{{\nabla}^\mathcal{H}u}{u} +2\frac{{\nabla}^\mathcal{H}(a^2-r^2)}{(a^2-r^2)},
\end{equation}
and 
\begin{equation} 
\begin{aligned} \label{qdd}
0\ge\frac{\Delta_{\mathcal{H}}u}{u} -2\frac{\Delta_{\mathcal{H}}r^2}{a^2-r^2}-6\frac{|{\nabla}^\mathcal{H}(a^2-r^2)|^2}{(a^2-r^2)^2}.
\end{aligned}	
\end{equation}
In terms of the curvature assumptions for both $M$ and $N$, the Bochner formula \eqref{bohoml} gives that 
\begin{equation} \label{x11}
\frac{1}{2}\Delta_{\mathcal{H}} u \ge -K_1u+K_2u^2.
\end{equation}
Similar to the argument in Theorem \ref{thm1.1}, by (\ref{14}), (\ref{12}), (\ref{qdd}) and (\ref{x11}), we deduce that
\begin{equation} 
u(x_0) \le \frac{K_1}{K_2}+\frac{12a^2}{K_2{(a^2-r^2(x_0))^2}}+\frac{2q(1+aC)+2}{K_2(a^2-r^2(x_0))}.
\end{equation}
Hence, 
\begin{equation}
\begin{aligned}
(a^2 -r^2(x))^2u(x) & \le(a^2-r^2(x_0))^2u(x_0)\\
		&\le \frac{K_1}{K_2}a^4+\frac{12a^2}{K_2}+a^2\frac{2q(1+aC)+2}{K_2},
\end{aligned}
\end{equation}
which implies 
\begin{equation}
\begin{aligned}
u(x)\le \frac{1}{K_2{(a^2-r^2(x))^2}}({K_1}a^4+{12a^2}+a^2(2q(1+aC)+2)),
\end{aligned}
\end{equation}
for any $x$ in $B_a(z)$. Let $a \to \infty$, we obtain that 
\begin{equation}
\sup_{x \in M}u(x) \le \frac{K_1}{K_2}.
\end{equation} 
\end{proof}

\section*{Acknowledgemnts}
The authors would like to thank Prof. Yuxin Dong and Prof. Gui Mu for their useful discussions and helpful comments.

\bigskip
Xin Huang

School of Mathematical Sciences

Fudan University

Shanghai 200433, P. R. China

17110180003@fudan.edu.cn

\bigskip

Weike Yu

School of mathematics and statistics

Nanjing University of Science and Technology

Nanjing, 210094, Jiangsu, P. R. China

wkyu2018@outlook.com

\bigskip

\end{document}